\newtheorem{de}{Definition}[section]}
\newtheorem{theo}[de]{\textsc{Theorem}}}
\newtheorem{prop}[de]{Proposition}}
\newtheorem{lem}[de]{Lemma}}
\newtheorem{cor}[de]{Corollary}}
\theoremstyle{remark}
\newtheorem{remar}[de]{Remark}}
\theoremstyle{remark}
\theoremstyle{remark}
\theoremstyle{remark}
\newtheorem{ex}[de]{Example}}
\theoremstyle{remark}
\newtheorem{nota}[de]{Notation}}
\newcommand{\CC}{\mathbb{C}}
\newcommand{\C}{\mathbb{C}}
\newcommand{\NN}{\mathbb{N}}
\newcommand{\Sing}{\mathrm{Sing}}
\newcommand{\depth}{\mathrm{depth}}
\newcommand{\wt}[1]{\ensuremath{\widetilde{#1}}}
\newcommand{\und}[1]{\underline{#1}}
\newcommand{\m}{\ensuremath{\mathfrak{m}}}
\newcommand{\Der}{\mathrm{Der}_{\CC^n}}
\newcommand{\projdim}[1]{\mathrm{projdim\lrp{#1}}}
\newcommand{\dr}{\partial}
\newcommand{\dd}{\mathrm{d}}
\newcommand{\lra}[1]{\left\{#1\right\}}
\newcommand{\lrp}[1]{\left(#1\right)}
\newcommand{\lrb}[1]{\left\langle#1\right\rangle}
\newcommand{\mc}[1]{\mathcal{#1}}
\title{On constructions of free singularities}
\author[R.~Epure]{Raul Epure}
\address{ \linebreak
Raul Epure\\
Department of Mathematics, TU Kaiserslautern\\
  67663 Kaiserslautern\\
  Germany
 }
 \email{\href{epure@mathematik.uni-kl.de}{epure@mathematik.uni-kl.de}}
\author[D.~Pol]{Delphine Pol}
\address{ \linebreak
Delphine Pol\\
Department of Mathematics, TU Kaiserslautern\\
  67663 Kaiserslautern\\
  Germany
 }
 \email{\href{pol@mathematik.uni-kl.de}{pol@mathematik.uni-kl.de}}
 \date{\today}
\subjclass{14B05(Primary), 13D02, 	13N15, 14N20 }
\thanks{DP is supported by a Humboldt Research Fellowship for Postdoctoral Researchers.}
\keywords{freeness, subspace arrangements, logarithmic vector fields}
\newcommand{\infe}{<}
\newcommand{\Derlog}[2]{\mathrm{Der}^{#2}\left(-\log #1\right)}
\newcommand{\Jac}{\mathrm{Jac}}
\begin{document}

\begin{abstract}
The purpose of this paper is to give new examples of families of free singularities. We first show that a generic equidimensional subspace arrangement is free. Furthermore, we show that a product of two reduced Cohen-Macaulay subspaces is free if and only if both subspaces are free.
\end{abstract}

\maketitle

\section{Introduction}

The study of free divisors was initiated with the work of K. Saito in \cite{saito75} and \cite{Sai80}, and developed in the case of hyperplane arrangements in \cite{orlik-terao-hyperplanes}. Known families of free divisors are for example the discriminant of a deformation of an isolated hypersurface singularity (see \cite{Sai80}) or reflection arrangements (\cite{orlik-terao-hyperplanes}). 

A generalization of the notion of free divisors to complete intersections  is suggested in  \cite{gsci}, which is then extended to Cohen-Macaulay subspaces and equidimensional subspaces in \cite{polthese} and \cite{polfreeci}. Basic examples of free singularities are given in \cite{polfreeci}: curves and arbitrary unions of equidimensional coordinate subspaces. 

\smallskip

The purpose of this paper is to give new families of free singularities. 

\smallskip

We first show that a generic equidimensional subspace arrangement of codimension $k$ in $\C^n$ is free if the number of subspaces is lower than or equal to $\binom{n}{k}$ (see Theorem~\ref{theo:gen:free}). 

We notice that the singular locus of a Thom-Sebastiani sum of non-smooth normal crossing divisors is free, whereas the divisor itself is not free (see Lemma~\ref{lem:normal:cros}).
Since the singular locus of the aforementioned divisor is the product of the singular locus of the two individual divisors, the question of investigating the relation between freeness and products arises. We show that a product of two reduced Cohen-Macaulay subspaces is free if and only if the two subspaces are free (see Theorem~\ref{theo:prod}). In the particular case of divisors, it follows that the product of two divisors is a free complete intersection of codimension $2$ if and only if both divisors are free. 

All computations have been performed using the computer algebra system \textsc{SINGULAR}  (\cite{DGPS}). In order to compute all mentioned algebraic objects we provide the \textsc{SINGULAR}-library \texttt{logmodules.lib} which can be downloaded under \newline \emph{https://github.com/delphinepol/Free-singularities/blob/main/logmodules.lib}. 
\vspace*{.2cm}

\noindent\textbf{Acknowledgement}\textit{.} We thank Michel Granger, Mathias Schulze and the anonymous referee for helpful suggestions and comments. This paper is part of the first authors Ph.D. thesis (see \cite{muhthese}). 

\section{Preliminaries}

Let $n\in\NN_{\geqslant 1}$. Throughout this paper, if not stated otherwise, let $S$ be either $\CC[x_1,\ldots,x_n]$ or $\CC\lra{x_1,\ldots,x_n}$. For the sake of simplicity, we will also write $\C^n$ in the local case instead of $(\C^n,0)$.

We denote by $\Der$ the $S$-module of vector fields on $\CC^n$, which is a free $S$-module of rank $n$, generated by the vector fields $\lra{\dr_{x_1},\ldots,\dr_{x_n}}$.

For $q\in\NN$ we denote by $\Omega^q_{\C^n}$ the module of differential forms of degree $q$ on~$\CC^n$ and we consider the usual pairing $\lrb{\cdot,\cdot} :\bigwedge^q \Der \times \Omega^q_{\C^n} \to S$.

A generalization of the module of logarithmic vector fields along singular hypersurfaces (see \cite{Sai80}) is introduced in \cite{gsci} for complete intersections and in \cite{polfreeci} for general equidimensional subspaces. We give here the equivalent definition as stated in \cite{STfree}:

\begin{de}[\protect{\cite[Definition 3.19]{STfree}}]
\label{de:Derlog}
Let $X$ be an equidimensional subspace of codimension $k$ defined as the vanishing set of the radical ideal $I_X$. The module of multi-logarithmic $k$-vector fields along $X$ is defined by
\[
\Derlog{X}{k}=\lra{\delta\in\bigwedge^k \Der \mid \forall (f_1,\ldots,f_k)\in I_{X}, \lrb{\delta,\dd f_1\wedge \dots\wedge \dd f_k}\in I_{X}}.
\]
\end{de}

\begin{remar}
\label{remar:gen:I}
Let $\lra{h_1,\ldots,h_r}$ be a generating set of $I_{X}$.  
Let $\delta\in\bigwedge^k \Der$. 
Then $\delta\in\Derlog{X}{k}$ if and only if for all $\lrp{i_1\infe\dots\infe i_k}\subseteq \lra{1,\ldots,r}$, $\lrb{\delta,\dd h_{i_1}\wedge \dots\wedge \dd h_{i_k}}\in I_{X}$. 
\end{remar}

A reduced hypersurface $D$ is called free if and only if $\mathrm{Der}(-\log D):=\Derlog{D}{1}$ is a free $S$-module (see \cite{Sai80}). A generalization of this notion to higher codimensional subspaces is the following:

\begin{de}[\protect{\cite[Definition 4.3]{polfreeci}}]
\label{de:free}
An equidimensional reduced subspace $X\subseteq \C^n$ of codimension $k$ is called free if and only if 
\[
\projdim{\Derlog{X}{k}}=k-1.
\]
\end{de}

In the case of hypersurfaces, the criterion of Terao and Aleksandrov (\cite{terao-hyperplanes-freeness-80}, \cite{alek}) gives a characterization of freeness in terms of a property of the singular locus. It is shown in \cite{polfreeci} that this property can be extended to Cohen-Macaulay spaces.

\medskip

Let $X\subseteq \C^n$ be a reduced equidimensional subspace. One can prove that there exists a regular sequence $(f_1,\ldots,f_k)\subseteq I_X$ such that the ideal $I_C$ generated by $f_1,\ldots,f_k$ is radical (see \cite[Remark 4.3]{alektsikh} or \cite[Proposition 4.2.1]{polthese} for a detailed proof of this result). We fix such a sequence $(f_1,\ldots,f_k)$ and denote by $C$ the complete intersection defined by the ideal $I_C=\lrb{f_1,\ldots,f_k}$.

\begin{nota}[\protect{\cite[Notation 3.6]{polfreeci}}]
\label{nota:JXC}
Let $X$ be a reduced equidimensional subspace of codimension $k$ in $\C^n$ and $C$ be a reduced complete intersection of codimension $k$ in $\C^n$ containing $X$. Let $J_{X/C}=J_C+I_X$, where $J_C$ is the Jacobian ideal of $C$, that is to say, the ideal of $S$ generated by the $k\times k$ minors of the Jacobian matrix of $(f_1,\ldots,f_k)$. 
\end{nota}

\begin{remar}
The vanishing set of the ideal $J_{X/C}$ is the restriction of the singular locus of $C$ to $X$. If $X$ is not a complete intersection, it does not describe the singular locus of $X$.
\end{remar}

The following proposition generalizes \cite[Definition 5.1]{gsci}:

\begin{prop}{\cite[Proposition 4.2]{polfreeci}}
\label{prop:free:sing:loc}
Let $X\subseteq \C^n$ be a reduced equidimensional subspace of codimension $k$ in $\C^n$ and $C$ be a reduced complete intersection of codimension $k$ containing $X$. Then $X$ is free if and only if $S/J_{X/C}=0$ or $S/J_{X/C}$ is Cohen-Macaulay of dimension $n-k-1$.
\end{prop}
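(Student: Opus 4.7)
The plan is to build a short exact sequence linking $\Derlog{X}{k}$, the free module $\bigwedge^k\Der$, and the ideal $J_{X/C}/I_X\subseteq S/I_X$, and then translate the projective-dimension condition of Definition~\ref{de:free} into a Cohen--Macaulay criterion on $S/J_{X/C}$ via the Auslander--Buchsbaum formula and the depth lemma.

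Concretely, I would consider the $S$-linear map
$$\Phi \colon \bigwedge^k\Der \longrightarrow S/I_X, \qquad \delta \longmapsto \langle\delta, \dd f_1\wedge\cdots\wedge \dd f_k\rangle \bmod I_X.$$
Its image is exactly $J_{X/C}/I_X$: evaluating $\Phi$ on the basis $\partial_{x_{i_1}}\wedge\cdots\wedge\partial_{x_{i_k}}$ of $\bigwedge^k\Der$ produces the $k\times k$ minors of the Jacobian of $(f_1,\ldots,f_k)$, which generate $J_C$, and whose residues generate $(J_C+I_X)/I_X=J_{X/C}/I_X$. The inclusion $\Derlog{X}{k}\subseteq \ker\Phi$ is immediate from Definition~\ref{de:Derlog} by specialising to the tuple $(f_1,\ldots,f_k)\in I_C\subseteq I_X$. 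Granting the reverse inclusion (see below), the resulting short exact sequence
$$0\longrightarrow \Derlog{X}{k}\longrightarrow \bigwedge^k\Der \longrightarrow J_{X/C}/I_X\longrightarrow 0,$$
together with the freeness of $\bigwedge^k\Der$ and Auslander--Buchsbaum, gives (when $J_{X/C}/I_X\neq 0$)
$$\mathrm{projdim}_S(\Derlog{X}{k}) = \mathrm{projdim}_S(J_{X/C}/I_X)-1,$$
so that freeness of $X$ becomes equivalent to $\depth_S(J_{X/C}/I_X) = n-k$. Feeding this into
$$0\longrightarrow J_{X/C}/I_X \longrightarrow S/I_X \longrightarrow S/J_{X/C}\longrightarrow 0$$
and chasing depths, combined with the bound $\dim S/J_{X/C} \leq n-k-1$ (reducedness of $C$ forces the singular locus of $C$ to meet $X$ in codimension at least one), delivers the desired equivalence: freeness of $X$ matches Cohen--Macaulayness of $S/J_{X/C}$ in dimension $n-k-1$. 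The degenerate case $J_{X/C}=S$ corresponds to $X$ sitting in the smooth locus of $C$, and both sides then hold trivially.

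The principal obstacle is verifying $\ker\Phi \subseteq \Derlog{X}{k}$. By Remark~\ref{remar:gen:I}, $\Derlog{X}{k}$ is cut out by pairing conditions for every $k$-subset of a generating set of $I_X$, while membership in $\ker\Phi$ records only the condition for the distinguished tuple $(f_1,\ldots,f_k)$. When $X=C$ this is elementary: for $g_j=\sum_i a_{ji}f_i\in I_C$ one computes $\dd g_1\wedge\cdots\wedge \dd g_k \equiv \det(a_{ji})\,\dd f_1\wedge\cdots\wedge \dd f_k \pmod{I_C}$. For $I_X \supsetneq I_C$ this direct expansion breaks down and a finer argument is required --- plausibly a reflexivity/$S_2$-ification comparison showing that $\ker\Phi$ and $\Derlog{X}{k}$ differ only on a locus of codimension high enough to preserve depth. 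A secondary subtlety is that the depth-lemma step is cleanest when $S/I_X$ has maximal depth $n-k$; when $X$ is not itself Cohen--Macaulay one likely has to route the argument through the Cohen--Macaulay ring $S/I_C$ and exploit $I_C\subseteq I_X$ to recover the criterion in full generality.
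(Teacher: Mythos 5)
First, note that the paper does not prove this proposition: it is imported verbatim from \cite[Proposition 4.2]{polfreeci}, so there is no in-paper argument to compare against. Your skeleton (the exact sequence $0\to\Derlog{X}{k}\to\bigwedge^k\Der\xrightarrow{\Phi}J_{X/C}/I_X\to 0$, then Auslander--Buchsbaum and a depth chase through $0\to J_{X/C}/I_X\to S/I_X\to S/J_{X/C}\to 0$) is indeed the standard route to this result. However, your write-up has a genuine gap exactly where you suspect one: the inclusion $\ker\Phi\subseteq\Derlog{X}{k}$, without which the first exact sequence --- the engine of the whole proof --- is not established. Moreover, the repair you propose (a reflexivity/$S_2$-ification comparison) is not the right tool. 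The correct argument is a localization at minimal primes: since $X\subseteq C$ are both reduced and equidimensional of codimension $k$, every irreducible component of $X$ is an irreducible component of $C$, so for each minimal prime $\mathfrak p$ of $I_X$ one has $(I_X)_{\mathfrak p}=(I_C)_{\mathfrak p}=(f_1,\ldots,f_k)S_{\mathfrak p}$. Given $\delta\in\ker\Phi$ and $g_1,\ldots,g_k\in I_X$, write $g_j=\sum_l a_{jl}f_l$ in $S_{\mathfrak p}$; your determinant computation then gives $\lrb{\delta,\dd g_1\wedge\cdots\wedge\dd g_k}\equiv\det(a_{jl})\lrb{\delta,\dd f_1\wedge\cdots\wedge\dd f_k}\in\mathfrak p S_{\mathfrak p}$, hence $\lrb{\delta,\dd g_1\wedge\cdots\wedge\dd g_k}\in\mathfrak p$ by primality, and intersecting over all minimal primes (radicality of $I_X$) yields membership in $I_X$. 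This is a concrete missing idea, not a routine verification.

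Two further points need attention. You should record why $J_{X/C}/I_X\neq 0$ (reducedness of $C$ forces $J_C\not\subseteq\mathfrak p$ for every minimal prime $\mathfrak p$ of $I_C$, hence of $I_X$), since otherwise the shift $\projdim{\Derlog{X}{k}}=\projdim{J_{X/C}/I_X}-1$ could fail. More seriously, your final depth chase is only complete when $\depth(S/I_X)=n-k$, i.e.\ when $X$ is Cohen--Macaulay: in that case Lemma~\ref{lem:depth} applied to $0\to J_{X/C}/I_X\to S/I_X\to S/J_{X/C}\to 0$, together with $\dim S/J_{X/C}\leqslant n-k-1$, gives the equivalence cleanly in both directions. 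The proposition is stated for arbitrary reduced equidimensional $X$, and for non-Cohen--Macaulay $X$ the inequalities of the depth lemma alone do not exclude the case $\depth(S/I_X)=\depth(S/J_{X/C})<n-k-1$; you acknowledge this but do not resolve it, so as written your proof covers only the Cohen--Macaulay case (which is, admittedly, the only case used in this paper).
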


\begin{remar}
If $C'$ is another reduced complete intersection of codimension $k$ containing $X$, the modules $S/J_{X/C}$ and $S/J_{X/C'}$ are isomorphic as $S/I_X$-modules (see \cite[Remark 3.8]{polfreeci}).
\end{remar}

The module of multi-logarithmic $k$-vector fields of a union of reduced equidimensional subspaces of the same codimension satisfies the following property:

\begin{prop}[\protect{\cite[Proposition 5.1]{polfreeci}}]
\label{prop:inter}
Let $X$ be a reduced equidimensional subspace of codimension $k$, with irreducible components $X_1,\ldots, X_s$.
 Then:
$$\Derlog{X}{k}=\bigcap_{i=1}^s \Derlog{X_i}{k}.$$
\end{prop}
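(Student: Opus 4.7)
\medskip

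\textbf{Proof plan.} Since $X$ is reduced with irreducible components $X_1,\ldots,X_s$, we have $I_X = \bigcap_{i=1}^s I_{X_i}$, and each $I_{X_i}$ is prime. I would split the equality into the two inclusions and handle them separately, with the nontrivial direction requiring a multiplication trick.

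The inclusion $\bigcap_i \Derlog{X_i}{k} \subseteq \Derlog{X}{k}$ is essentially formal. Starting from $\delta$ in the intersection and any $(f_1,\ldots,f_k) \in I_X$, the inclusion $I_X \subseteq I_{X_i}$ yields $\lrb{\delta, \dd f_1 \wedge \cdots \wedge \dd f_k} \in I_{X_i}$ for every $i$, hence the pairing lies in $\bigcap_i I_{X_i} = I_X$.

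For the harder inclusion $\Derlog{X}{k} \subseteq \bigcap_i \Derlog{X_i}{k}$, fix $i$ and $(g_1,\ldots,g_k) \in I_{X_i}$; the $g_l$ need not lie in $I_X$, so the defining condition for $X$ cannot be applied directly. The key step is to pick, using that $X_j \not\subseteq X_i$ for $j \neq i$, an element $h \in \bigcap_{j \neq i} I_{X_j}$ with $h \notin I_{X_i}$. Then each $h g_l$ lies in $I_{X_j}$ for all $j$ (for $j \neq i$ because $h \in I_{X_j}$, and for $j=i$ because $g_l \in I_{X_i}$), hence in $I_X$. Applying the hypothesis on $\delta$ to the tuple $(hg_1,\ldots,hg_k)$ and expanding via the Leibniz rule
\[
\dd(hg_1) \wedge \cdots \wedge \dd(hg_k) = h^k\,\dd g_1 \wedge \cdots \wedge \dd g_k + h^{k-1}\sum_{l=1}^k g_l\,\dd g_1 \wedge \cdots \wedge \dd g_{l-1}\wedge \dd h \wedge \dd g_{l+1}\wedge \cdots \wedge \dd g_k
\]
(all terms with two or more $\dd h$ factors vanish), I pair with $\delta$. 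The left-hand side lies in $I_X \subseteq I_{X_i}$, and the sum on the right is in $I_{X_i}$ because each summand carries a factor $g_l \in I_{X_i}$. So $h^k \lrb{\delta, \dd g_1 \wedge \cdots \wedge \dd g_k} \in I_{X_i}$; since $I_{X_i}$ is prime and $h \notin I_{X_i}$, the desired $\lrb{\delta, \dd g_1 \wedge \cdots \wedge \dd g_k} \in I_{X_i}$ follows, giving $\delta \in \Derlog{X_i}{k}$.

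The main obstacle is the second inclusion, and specifically the observation that testing $\delta$ on tuples drawn from $I_{X_i}$ (not from $I_X$) requires the prime-avoidance-style element $h$ to transfer information across components. Everything else is then bookkeeping with the Leibniz expansion and primality of $I_{X_i}$.
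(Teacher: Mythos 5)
The paper does not actually prove this proposition in the text --- it is imported verbatim from \cite{polfreeci} --- so there is no in-paper argument to compare against; judged on its own, your proof is correct and is essentially the standard one for this statement. Both inclusions are handled properly: the easy direction via $I_X=\bigcap_i I_{X_i}$, and the hard direction via choosing $h\in\bigcap_{j\neq i}I_{X_j}$ with $h\notin I_{X_i}$ (which exists by primality of $I_{X_i}$ and the fact that no component contains another), testing $\delta$ on $(hg_1,\ldots,hg_k)\in I_X^k$, expanding the wedge so that all but the $h^k$-term carry a factor $g_l\in I_{X_i}$, and cancelling $h^k$ using that $I_{X_i}$ is prime.
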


Before giving some basic motivating examples of free singularities, let us introduce the following notation:

\begin{nota}
\label{nota:kos}
We denote by $K(\underline{f})$ the Koszul complex of a sequence $(f_1,\ldots,f_k)$ in $S$: 
\begin{equation}
\label{suite:kos}
K(\und{f}) \ :\  0\to\bigwedge^k S^k\xrightarrow{d_k}\cdots\xrightarrow{d_{2}}\bigwedge^1 S^k\xrightarrow{d_1} S\to 0.
\end{equation} 

The maps $d_p$ are given by 
\[
d_p(e_{i_1}\wedge \dots \wedge e_{i_p})=\sum_{j=1}^p (-1)^{j+1} f_j e_{i_1}\wedge\dots\wedge \widehat{e_{i_j}}\wedge \dots\wedge e_{i_p}.
\]

We also set $\wt{K}(\und{f})$ the complex obtained from $K(\und{f})$ by removing the last~$S$. 

\smallskip

 The complex $0\to S\to 0$ is denoted by $\mc{C}$. 
\end{nota}

\begin{ex}
\label{ex:coord}
Let $E_0=\lra{i_1<\dots<i_k}\subseteq \lra{1,\ldots,n}$ and let $X$ be the vector subspace of $\CC^n$ defined by the regular sequence $(x_{i_1},\ldots,x_{i_k})$. Then a generating set of $\Derlog{X}{k}$ is 
\[
\lra{x_{j}\wedge_{i\in E_0}\dr_{x_{i}}\mid j\in E_0}\cup \lra{\wedge_{i\in E} \dr_{x_i}\mid E\neq E_0}.
\]

A minimal free resolution of $\Derlog{X}{k}$ is then given by 
\[
\wt{K}\lrp{(x_i)_{i\in E_0}}\oplus\bigoplus_{1\leqslant i\leqslant\binom{n}{k}-1} \mc{C}.
\]
In particular, $\projdim{\Derlog{X}{k}}=k-1$ so that $X$ is free. 
\end{ex}

More generally, the following holds:

\begin{prop}[\protect{\cite[Corollary 5.5]{polfreeci}}]
\label{prop:normal:free}
Let $X$ be an equidimensional union of coordinate subspaces. Then $X$ is free.
\end{prop}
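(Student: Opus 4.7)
The plan is to give an explicit direct sum decomposition of $\Derlog{X}{k}$ by combining Example~\ref{ex:coord} with Proposition~\ref{prop:inter}, from which the projective dimension can be read off directly.

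Write $X=X_1\cup\cdots\cup X_s$ where each $X_i$ is a coordinate subspace of codimension $k$, so that $I_{X_i}=(x_l\mid l\in E_i)$ for some $E_i\subseteq\{1,\ldots,n\}$ of cardinality $k$, and the subsets $E_1,\ldots,E_s$ are pairwise distinct. Any $k$-vector field decomposes uniquely as $\delta=\sum_{|E|=k}a_E\,\partial_{x_E}$ with $\partial_{x_E}=\wedge_{l\in E}\partial_{x_l}$. The first step is to apply Remark~\ref{remar:gen:I} to the $X_i$ to observe that, since the generating set $\{x_l\mid l\in E_i\}$ of $I_{X_i}$ has exactly $k$ elements, the only non-trivial condition is $\langle\delta,\dd x_{l_1}\wedge\cdots\wedge\dd x_{l_k}\rangle\in I_{X_i}$ for $\{l_1,\ldots,l_k\}=E_i$, which amounts to $a_{E_i}\in I_{X_i}$. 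This reproves the description in Example~\ref{ex:coord}, namely
\[
\Derlog{X_i}{k}=I_{X_i}\cdot\partial_{x_{E_i}}\ \oplus\ \bigoplus_{E\neq E_i}S\cdot\partial_{x_E}.
\]

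Next, I would intersect these modules using Proposition~\ref{prop:inter}. Because the decompositions above are compatible with the basis $\{\partial_{x_E}\}_{|E|=k}$ of $\bigwedge^k\Der$ and the constraints imposed by different $X_i$ act on different coordinates $a_{E_i}$, the intersection splits as a direct sum
\[
\Derlog{X}{k}\;=\;\bigoplus_{i=1}^{s}I_{X_i}\cdot\partial_{x_{E_i}}\ \oplus\ \bigoplus_{E\notin\{E_1,\ldots,E_s\}}S\cdot\partial_{x_E}.
\]
The free summands $S\cdot\partial_{x_E}$ contribute projective dimension $0$, and each summand $I_{X_i}\cdot\partial_{x_{E_i}}\cong I_{X_i}$ admits the truncated Koszul resolution $\wt{K}\bigl((x_l)_{l\in E_i}\bigr)$, hence has projective dimension exactly $k-1$. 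Taking the maximum over summands yields $\projdim{\Derlog{X}{k}}=k-1$, so $X$ is free.

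The only real subtlety is the verification that the intersection genuinely splits along the basis $\{\partial_{x_E}\}$, rather than introducing mixed relations between different coordinates $a_E$. This step is immediate from the observation above that the condition defining $\Derlog{X_i}{k}$ inside $\bigwedge^k\Der$ involves only the single coefficient $a_{E_i}$; once this is noted, the rest of the argument is formal.
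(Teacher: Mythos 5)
Your proof is correct. The paper does not actually prove this proposition---it is quoted from \cite[Corollary 5.5]{polfreeci}---but your argument (the coefficientwise description $\Derlog{X_i}{k}=I_{X_i}\partial_{x_{E_i}}\oplus\bigoplus_{E\neq E_i}S\partial_{x_E}$ from Example~\ref{ex:coord}, the observation that the intersection from Proposition~\ref{prop:inter} splits along the basis $\{\partial_{x_E}\}$ because distinct components constrain distinct coefficients, and the identification of the projective dimension via truncated Koszul complexes) is precisely the computation the paper itself carries out for generic arrangements in the proof of Corollary~\ref{cor:free}, so it is essentially the intended argument specialized to coordinate subspaces.
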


Motivations for Section~\ref{construction} are given by the following lemmas:

\begin{lem}
\label{lem:sum:non:free}
Let $(X,0)$ be defined by $f\in\m^2\CC\lra{x_1,\ldots,x_n}$ and $(Y,0)$ be defined by $g\in\m^2\CC\lra{y_1,\ldots,y_m}$. Furthermore, assume that $f$ and $g$ are quasi-homogeneous and reduced. Then $h=f+g$ is free if and only if $f=0$ and $g$ is free or vice-versa. 
\end{lem}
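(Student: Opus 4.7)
The plan is to apply Proposition~\ref{prop:free:sing:loc} to $h$ after relating its Jacobian algebra in $S = \CC\lra{x,y}$ to the Jacobian algebras of $f$ in $A := \CC\lra{x_1,\ldots,x_n}$ and of $g$ in $B := \CC\lra{y_1,\ldots,y_m}$. Quasi-homogeneity lets me compute the singular locus of $h$ cleanly, after which a dimension pigeonhole forces one of $f,g$ to vanish.

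First, using Euler's identity, quasi-homogeneity gives $f\in J_f:=(\partial_{x_1}f,\ldots,\partial_{x_n}f)\subseteq A$ and $g\in J_g\subseteq B$. Since $\partial_{x_i}h=\partial_{x_i}f$ and $\partial_{y_j}h=\partial_{y_j}g$, the Jacobian ideal in $S$ satisfies
\[
J(h)\;=\;J_f\cdot S\;+\;J_g\cdot S.
\]
As $S\cong A\,\widehat{\otimes}_{\CC}\,B$, the natural identification
\[
S/J(h)\;\cong\;(A/J_f)\,\widehat{\otimes}_{\CC}\,(B/J_g)
\]
yields dimension additivity $\dim S/J(h)=\dim A/J_f+\dim B/J_g$, and (since $B$ is a regular $\CC$-algebra) preserves the Cohen--Macaulay property in the sense that $S/J(h)$ is Cohen--Macaulay if and only if both $A/J_f$ and $B/J_g$ are.

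By Proposition~\ref{prop:free:sing:loc} applied to the hypersurface $\{h=0\}$, freeness of $h$ is equivalent to $S/J(h)$ being either zero or Cohen--Macaulay of dimension $n+m-2$. Since $h\in\m^2$ is singular at the origin, we have $S/J(h)\neq 0$, so freeness forces
\[
\dim A/J_f+\dim B/J_g\;=\;n+m-2.
\]
Now a reduced quasi-homogeneous $f\in\m^2A$ obeys a sharp dichotomy: either $f=0$, in which case $\dim A/J_f=n$; or $f\neq 0$, in which case $V(f)\subseteq\CC^n$ is a reduced singular hypersurface, its singular locus $V(J(f))=V(J_f)$ has codimension at least $2$ in $\CC^n$, and hence $\dim A/J_f\leqslant n-2$. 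The same dichotomy holds for $g$. A sum of one term from $\{n\}\cup\{0,1,\ldots,n-2\}$ and one from $\{m\}\cup\{0,1,\ldots,m-2\}$ can equal $n+m-2$ only when one summand equals $n$ (resp.\ $m$) and the other equals $m-2$ (resp.\ $n-2$). Thus exactly one of $f,g$ vanishes, say $g=0$, and the other has Jacobian algebra of the critical dimension; together with the Cohen--Macaulay observation above, this gives both directions of the equivalence: $h$ is free if and only if one of $f,g$ is zero and the other is free.

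I expect the main technical point to be the careful handling of the completed tensor product $S=A\,\widehat{\otimes}_{\CC}\,B$---specifically, justifying the identification $S/J(h)\cong (A/J_f)\,\widehat{\otimes}_{\CC}\,(B/J_g)$, the additivity of dimension, and the descent/ascent of Cohen--Macaulayness under $-\,\widehat{\otimes}_{\CC}\,B$. These are standard facts about complete local $\CC$-algebras, but they must be set up cleanly; once they are in place, the rest is the combinatorial pigeonhole and an appeal to the Terao--Aleksandrov--Saito criterion of Proposition~\ref{prop:free:sing:loc}.
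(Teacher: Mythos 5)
Your proposal is correct and follows essentially the same route as the paper: quasi-homogeneity gives $h\in J(h)$ and identifies $S/J(h)$ with the product $(A/J_f)\,\hat{\otimes}\,(B/J_g)$ of the singular loci, after which the bound $\dim A/J_f\leqslant n-2$, $\dim B/J_g\leqslant m-2$ for nonzero reduced $f,g$ contradicts the dimension $n+m-2$ required by Proposition~\ref{prop:free:sing:loc}. The only difference is one of detail: the paper states the product decomposition of the singular locus and the dimension count in two lines and leaves the converse direction implicit, whereas you spell out the tensor-product bookkeeping and the ``if'' direction explicitly.
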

\begin{proof}
Assume that both $f$ and $g$ are non-zero.

The singular locus of $h$ satisfies $(\Sing(V(h)),0)=(\Sing(X),0)\times (\Sing(Y),0).$

Thus $\dim (\Sing(V(h)),0)\leqslant n+m-4$ and by Proposition~\ref{prop:free:sing:loc}, $h$ is not free.  
\end{proof}

\begin{lem}
\label{lem:normal:cros}
Let $f\in\CC\lra{x_1,\ldots,x_n}$ and $g\in \CC\lra{y_1,\ldots,y_m}$ be the equations of non-smooth normal crossing divisors. Let $(X,0)=(V(f+g),0)$. Then $(X,0)$ is not free, whereas $(\Sing(X),0)$ is free. 
\end{lem}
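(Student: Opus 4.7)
The plan is to reduce the question to the two general results already proved: Lemma~\ref{lem:sum:non:free} for the non-freeness of $X$, and Proposition~\ref{prop:normal:free} for the freeness of $\Sing(X)$.

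After a holomorphic change of coordinates, the hypothesis that $f$ and $g$ are equations of non-smooth normal crossing divisors means that we may write $f = x_1 \cdots x_p$ with $p\geqslant 2$ and $g = y_1\cdots y_q$ with $q\geqslant 2$. Both germs are then homogeneous (hence quasi-homogeneous), reduced, and lie in $\m^2$. For the first assertion we simply apply Lemma~\ref{lem:sum:non:free}: since neither $f$ nor $g$ is zero, their Thom-Sebastiani sum $h=f+g$ is not free, so $(X,0)$ is not free.

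For the second assertion I would start by computing the singular locus of $X$. Because the variables $(x_i)$ and $(y_j)$ are disjoint, the Jacobian ideal of $h$ is generated by the $\partial f/\partial x_i$ and $\partial g/\partial y_j$. Moreover, Euler's identity applied to the homogeneous polynomials $f$ and $g$ yields $f,g \in \lrb{\partial f/\partial x_i,\partial g/\partial y_j}$, so the equation $h=f+g$ is redundant in the ideal of $\Sing(X)$. Thus
\[
\Sing(X) \;=\; \Sing(V(f))\times \Sing(V(g)).
\]

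Next, I would identify these two singular loci. For $f=x_1\cdots x_p$ one has $\partial f/\partial x_i=\prod_{j\neq i}x_j$, so the common vanishing locus is the set of points where at least two of $x_1,\ldots,x_p$ vanish, i.e.\ the union of coordinate subspaces
\[
\Sing(V(f)) \;=\; \bigcup_{1\leqslant i<j\leqslant p} V(x_i,x_j),
\]
and analogously for $\Sing(V(g))$. Taking the product gives
\[
\Sing(X) \;=\; \bigcup_{\substack{1\leqslant i<j\leqslant p \\ 1\leqslant k<l\leqslant q}} V(x_i,x_j,y_k,y_l),
\]
an equidimensional union of coordinate subspaces of codimension $4$ in $\C^{n+m}$. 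Freeness then follows immediately from Proposition~\ref{prop:normal:free}.

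The only minor subtlety is the verification that the equation $h$ drops out of the defining ideal of $\Sing(X)$, which requires the quasi-homogeneity of $f$ and $g$; everything else is a direct application of the quoted results.
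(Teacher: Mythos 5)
Your proof is correct and follows exactly the route the paper takes: the paper's own proof is the one-line reduction to Lemma~\ref{lem:sum:non:free} (for non-freeness of $X$) and Proposition~\ref{prop:normal:free} (for freeness of $\Sing(X)$), and you have simply filled in the details, all of which check out. In particular your identification of $\Sing(X)$ as an equidimensional codimension-$4$ union of coordinate subspaces, using Euler's identity to discard the equation $h$, is the intended justification for invoking Proposition~\ref{prop:normal:free}.
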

\begin{proof}
The lemma follows from Lemma~\ref{lem:sum:non:free} and Proposition~\ref{prop:normal:free}. 
\end{proof}

\begin{remar}
These lemmas show that a direct sum of normal crossing divisors is not a free divisor, whereas the corresponding singular locus, which is built as a product of the individual singular loci, is a free singularity of codimension $4$. The question of the behaviour of freeness with products then naturally arises. 
\end{remar}
\begin{remar}

The motivation to consider Lemma~\ref{lem:normal:cros} arises from the following: in this setup, using \cite[Theorem 4]{HM86}, the isomorphy class of the singular locus determines the isomorphy class of the divisor, but the property of being free does not transfer from the singular locus to the divisor.
\end{remar}

\section{Generic subspace arrangements and freeness}

In this section we assume $S=\C[x_1,\ldots,x_n]$.

\begin{de}
\label{de:subsp:arr}
An equidimensional subspace arrangement of codimension $k$ in $\CC^n$ is a finite union of pairwise distinct vector subspaces of codimension $k$ in $\CC^n$. We denote by $I_{X}\subseteq S$ the radical ideal of vanishing polynomials on $X$. 
\end{de}

\begin{remar}
	The term subspace arrangement always refers to a union of \textit{vector} subspaces in contrast to the previous part, where we allowed the union of any kind of analytic subspaces.
\end{remar}

\begin{de}
Let $\delta\in\bigwedge^k\Der$. We say that $\delta$ is homogeneous of degree $p$ if there exist homogeneous polynomials $(a_{E})_{|E|=k, E\subseteq \lra{1,\ldots,n}}$ of degree $p$  such that 
\[
\delta=\sum_{\substack{E\subseteq \lra{1,\ldots,n}\\|E|=k}} \lrp{a_E \bigwedge_{i\in E} \dr_{x_i}}.
\]
\end{de}

\begin{nota}
Let $M$ be a graded $S$-module. For $p\in\NN$ we denote by $M_p$ the submodule of $M$ composed of the homogeneous elements of $M$ of degree $p$. 
\end{nota}

\begin{de}
\label{de:generic}
Let $\Lambda$ be a finite index set and let $X=\bigcup_{i\in\Lambda} X_i$ be an equidimensional subspace arrangement of codimension $k$. We say that $X$ is generic if for $j=\min\lra{|\Lambda|,\binom{n}{k}}$ and for all $I\subseteq \Lambda$ with $|I|=j$, it holds that 
\[
\dim_\C\lrp{\bigcap_{i\in I} \Derlog{X_{i}}{k}_0}=\binom{n}{k}-j.
\]
\end{de}

\begin{remar}
The condition given in Definition~\ref{de:generic} generalizes the usual definition of generic hyperplane arrangement (see \cite[Definition 5.22]{orlik-terao-hyperplanes}), since for a hyperplane $H$, $\Derlog{H}{1}_0$ is equal to the vector fields tangent to the hyperplane.
\end{remar}

\begin{remar}
If the coefficients of the defining linear equations of the irreducible components are chosen randomly, the condition of Definition~\ref{de:generic} is satisfied. This remark can be used to create examples in a computer algebra system such as \textsc{Singular} (\cite{DGPS}). 
\end{remar}

Up to a change of coordinates, it is easy to see that a generic hyperplane arrangement in $\C^n$ with at most $n$ hyperplanes is isomorphic to a normal crossing divisor, and thus is free. The purpose of this section is to prove the following generalization of this result:

\begin{theo}
\label{theo:gen:free}
Let $X=X_1\cup\ldots\cup X_s$ be an equidimensional subspace arrangement of codimension $k$ in $\CC^n$ such that for all $i\in\lra{1,\ldots,s}$, $X_i$ is a vector subspace defined by the regular sequence $(h_{i,1},\ldots,h_{i,k})$. 

If $s\leqslant\binom{n}{k}$ and $X$ is a generic subspace arrangement, then there exists a basis $\lrp{\delta_1,\ldots,\delta_{\binom{n}{k}}}$ of $\bigwedge^k\Der$ such that a minimal generating set of $\Derlog{X}{k}$ is given by 
\begin{equation}
\label{eq:theo}
\lra{h_{i,j}\delta_i \mid i\in\lra{1,\ldots,s}, j\in\lra{1,\ldots,k}} \cup\lra{\delta_i \mid i\geqslant s+1}.
\end{equation}
\end{theo}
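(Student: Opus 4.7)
The plan is to construct the basis $(\delta_1,\ldots,\delta_{\binom{n}{k}})$ from \emph{constant-coefficient} $k$-vector fields adapted to the arrangement, so that the generation claim reduces to a pairing computation and minimality drops out from the graded structure.

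\textbf{Step 1 (building the basis).} For $\delta\in\bigwedge^k\Der_0$ the pairing $\lrb{\delta,\dd h_{i,1}\wedge\cdots\wedge\dd h_{i,k}}$ is a scalar and lies in $I_{X_i}$ iff it vanishes, so $\Derlog{X_i}{k}_0$ is a hyperplane in the $\binom{n}{k}$-dimensional space $\bigwedge^k\Der_0$. The genericity hypothesis gives $\dim_\C\bigcap_{i=1}^s\Derlog{X_i}{k}_0=\binom{n}{k}-s$; since a chain of hyperplane intersections can drop dimension by at most one at each step, reaching this minimum forces every partial intersection $\bigcap_{j\in I}\Derlog{X_j}{k}_0$ to have exactly codimension $|I|$. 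For $i\leqslant s$ I then pick any $\delta_i\in\bigcap_{j\leqslant s,\,j\neq i}\Derlog{X_j}{k}_0\setminus\Derlog{X_i}{k}_0$ (non-empty by the codimension jump) and complete with a $\C$-basis $\delta_{s+1},\ldots,\delta_{\binom{n}{k}}$ of $\bigcap_{j=1}^s\Derlog{X_j}{k}_0$. Pairing with $\dd h_{i,1}\wedge\cdots\wedge\dd h_{i,k}$ annihilates every $\delta_\ell$ with $\ell\neq i$ and returns a nonzero scalar $c_i$ on $\delta_i$, whence the $\binom{n}{k}$ constant-coefficient $k$-vector fields are $\C$-linearly independent and therefore form an $S$-basis of the free module $\bigwedge^k\Der$.

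\textbf{Step 2 (generation).} By Proposition~\ref{prop:inter}, $\Derlog{X}{k}=\bigcap_{i=1}^s\Derlog{X_i}{k}$. Each $\delta_\ell$ with $\ell>s$ is tangent to every $X_j$ by construction; for $h_{i,j}\delta_i$ the only nontrivial check is against $X_i$, where $\lrb{h_{i,j}\delta_i,\dd h_{i,1}\wedge\cdots\wedge\dd h_{i,k}}=h_{i,j}c_i\in I_{X_i}$. Conversely, writing any $\delta\in\Derlog{X}{k}$ uniquely as $\delta=\sum_\ell a_\ell\delta_\ell$ with $a_\ell\in S$, pairing with $\dd h_{i,1}\wedge\cdots\wedge\dd h_{i,k}$ collapses to $a_ic_i\in I_{X_i}$, forcing $a_i\in I_{X_i}=(h_{i,1},\ldots,h_{i,k})$ for every $i\leqslant s$ and yielding the desired expression in terms of the proposed generators.

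\textbf{Step 3 (minimality).} Since the generators are homogeneous of degrees $0$ and $1$, minimality amounts to $\C$-linear independence in $\Derlog{X}{k}/\m\Derlog{X}{k}$. In a tentative relation $\sum_{i\leqslant s,j}c_{ij}h_{i,j}\delta_i+\sum_{\ell>s}c_\ell\delta_\ell\in\m\Derlog{X}{k}$ the degree-$0$ part forces $c_\ell=0$ (the $\delta_\ell$ are $\C$-independent), and, reading the degree-$1$ part in the $S$-basis $(\delta_1,\ldots,\delta_{\binom{n}{k}})$ of $\bigwedge^k\Der$, one gets $\sum_j c_{ij}h_{i,j}=0$ for each $i$, which forces $c_{ij}=0$ by regularity of $(h_{i,1},\ldots,h_{i,k})$.

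\textbf{The main obstacle} is Step 1: one must extract from the single dimension equality granted by the genericity hypothesis the expected codimension for every partial intersection, and simultaneously arrange the $\delta_i$'s into an honest $S$-basis of the entire free module $\bigwedge^k\Der$, not merely a $\C$-independent family living in the various intersection loci. Once this linear-algebraic scaffolding is in place, Steps 2 and 3 are formal consequences of the pairing computation.
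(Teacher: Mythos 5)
Your proof is correct, but it takes a genuinely different route from the paper's. The paper proceeds by induction on $s$: the base case is Lemma~\ref{lem:one:space} (essentially your pairing computation, namely that $\lrb{\nu,\dd h_{i,1}\wedge\dots\wedge\dd h_{i,k}}=c_i a_i\in I_{X_i}$ forces $a_i\in I_{X_i}$ for a single subspace), and the inductive step combines Proposition~\ref{prop:inter} with the abstract basis-change Lemma~\ref{linalglemma}, which intersects a module of the form $\bigoplus_i I_ib_i\oplus\bigoplus_j Rb_j$ with one of the form $Ic_1\oplus\bigoplus_i Rc_i$ one component at a time, producing a new basis at each step. You avoid both the induction and Lemma~\ref{linalglemma} by constructing the whole basis in one shot as a dual basis to the functionals $\lrb{\,\cdot\,,\dd h_{i,1}\wedge\dots\wedge\dd h_{i,k}}$ on $\lrp{\bigwedge^k\Der}_0$; the key point, which you justify correctly, is that the single dimension count in Definition~\ref{de:generic} forces every partial intersection of the hyperplanes $\Derlog{X_i}{k}_0$ to be transverse (each hyperplane drops the dimension by at most one), so the sets $\bigcap_{j\neq i}\Derlog{X_j}{k}_0\setminus\Derlog{X_i}{k}_0$ are nonempty. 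The paper's induction silently relies on the same transversality fact (it needs every sub-arrangement of a generic arrangement to be generic), so you are not assuming more than the stated hypothesis. Your version buys an explicit basis and reduces generation and minimality to one-line pairing and degree arguments; the paper's version isolates the module-intersection statement as a reusable graded lemma at the cost of repeated basis changes.
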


\begin{cor}
\label{cor:free}
Let $X=X_1\cup\ldots \cup X_s$ be an equidimensional subspace arrangement of codimension $k$ in $\CC^n$ satisfying the hypothesis of Theorem~\ref{theo:gen:free}. Then $X$ is free.
\end{cor}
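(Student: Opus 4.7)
The plan is to leverage the explicit minimal generating set of $\Derlog{X}{k}$ provided by Theorem~\ref{theo:gen:free} and to read off the projective dimension directly. Since $\lrp{\delta_1, \ldots, \delta_{\binom{n}{k}}}$ is an $S$-basis of the free module $\bigwedge^k \Der$, every element admits a unique expansion in this basis, and so $\Derlog{X}{k}$ can be described coordinate by coordinate: the coefficient of $\delta_i$ must lie in the ideal $I_i := \lrb{h_{i,1}, \ldots, h_{i,k}}$ for $i\leqslant s$, and is arbitrary for $i\geqslant s+1$. I expect this to yield a direct sum decomposition
\[
\Derlog{X}{k} \;\cong\; \bigoplus_{i=1}^{s} I_i \;\oplus\; S^{\binom{n}{k} - s}.
\]

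Next I would determine the projective dimension of each $I_i$. Since $(h_{i,1}, \ldots, h_{i,k})$ is a regular sequence by hypothesis, the Koszul complex of Notation~\ref{nota:kos} furnishes a minimal free resolution of $S/I_i$ of length $k$, and truncating off the last term shows $\projdim{I_i} = k-1$. Using that the projective dimension of a finite direct sum of modules is the maximum of the projective dimensions of its summands, and noting $s \geqslant 1$, I would conclude
\[
\projdim{\Derlog{X}{k}} = k-1,
\]
which is precisely the condition of Definition~\ref{de:free}.

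There is no genuine obstacle here: all the heavy lifting is packaged in Theorem~\ref{theo:gen:free}, and the corollary is essentially an unpacking. The only point requiring any care is to verify that the generating set in the theorem really corresponds to a direct sum decomposition in the basis $\lrp{\delta_i}$ rather than an expression entangled by nontrivial syzygies, but this is immediate from the $S$-linear independence of the basis $(\delta_1, \ldots, \delta_{\binom{n}{k}})$ of $\bigwedge^k\Der$.
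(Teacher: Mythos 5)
Your proposal is correct and follows essentially the same route as the paper: both read off the direct sum decomposition $\Derlog{X}{k}=\bigoplus_{i=1}^{s}\lrb{h_{i,1},\ldots,h_{i,k}}\delta_i\oplus\bigoplus_{i=s+1}^{N}S\delta_i$ from the generating set of Theorem~\ref{theo:gen:free}, resolve each ideal by the truncated Koszul complex of its regular sequence, and conclude that the projective dimension is $k-1$. The only cosmetic difference is that the paper assembles the minimal free resolution explicitly as a direct sum of complexes, whereas you invoke the fact that projective dimension is additive over direct sums via the maximum; these are interchangeable.
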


In order to prove Theorem~\ref{theo:gen:free}, we need the following auxiliary lemmas.

\begin{lem}
	\label{lem:one:space}
	Let $h_1,\ldots,h_k$ be $k$ linear forms defining a vector subspace $X$ of codimension $k$.
	Then for any $\delta\in\lrp{\bigwedge^k\Der}_0\setminus \Derlog{X}{k}_0$ and $\mc{B}$ a basis of $\Derlog{X}{k}_0$ a minimal generating set of $\Derlog{X}{k}$ is of the form: 
	\[\mc{B}\cup\lra{h_i \delta\mid i\in\lra{1,\ldots,k}}.\] 
\end{lem}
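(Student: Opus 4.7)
The plan is to reduce to the case where $X$ is a coordinate subspace and then apply Example~\ref{ex:coord}. Since $h_1,\ldots,h_k$ are linearly independent linear forms, I can extend them to a linear coordinate system $(h_1,\ldots,h_n)$ on $\CC^n$, after which $X=V(h_1,\ldots,h_k)$ becomes the coordinate subspace with index set $E_0=\{1,\ldots,k\}$. Set $\delta_E=\bigwedge_{i\in E}\dr_{h_i}$ for $E\subseteq\{1,\ldots,n\}$ with $|E|=k$, and $\delta_0=\delta_{E_0}$. By Example~\ref{ex:coord}, $\Derlog{X}{k}$ admits a minimal free resolution whose zeroth term has rank $k+\binom{n}{k}-1$, and the set
\[
G=\{h_j\delta_0\mid j=1,\ldots,k\}\cup\{\delta_E\mid E\neq E_0\}
\]
is a minimal generating set; in particular the minimal number of generators of $\Derlog{X}{k}$ as a graded $S$-module equals $k+\binom{n}{k}-1$.

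Next, I would determine $\Derlog{X}{k}_0$ explicitly. A degree-$0$ element $\xi=\sum_E a_E\delta_E$ with $a_E\in\CC$ satisfies $\lrb{\xi,\dd h_1\wedge\cdots\wedge \dd h_k}=a_{E_0}$, and by Remark~\ref{remar:gen:I} the condition $\xi\in\Derlog{X}{k}_0$ is equivalent to $a_{E_0}\in I_X\cap\CC=\{0\}$. Hence $\{\delta_E\mid E\neq E_0\}$ is a $\CC$-basis of $\Derlog{X}{k}_0$, which therefore has dimension $\binom{n}{k}-1$. Any $\delta\in(\bigwedge^k\Der)_0\setminus\Derlog{X}{k}_0$ must have a nonzero coefficient $c$ on $\delta_0$, so I can decompose $\delta=c\delta_0+\eta$ with $\eta\in\Derlog{X}{k}_0$.

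Now let $\mc{B}$ be any basis of $\Derlog{X}{k}_0$, and set $G'=\mc{B}\cup\{h_j\delta\mid j=1,\ldots,k\}$. Since $\mc{B}$ and $\{\delta_E\mid E\neq E_0\}$ are both $\CC$-bases of $\Derlog{X}{k}_0$, their $S$-spans coincide; in particular $\eta$, and hence each $h_j\eta$, lies in the $S$-span of $\mc{B}$. Combining this with
\[
h_j\delta_0=\tfrac{1}{c}\bigl(h_j\delta-h_j\eta\bigr),
\]
I obtain $h_j\delta_0\in\langle G'\rangle_S$ for all $j$, and therefore $G\subseteq\langle G'\rangle_S$. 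Since $G$ generates $\Derlog{X}{k}$, so does $G'$. Minimality follows immediately from the count $|G'|=k+\binom{n}{k}-1$, which matches the minimal number of generators recorded above.

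The only point requiring care is the $\CC$-linear dimension computation of $\Derlog{X}{k}_0$ and the bookkeeping through the coordinate change; after that, the replacement of $\delta_0$ by $\delta$ is essentially linear algebra. I do not anticipate a serious obstacle.
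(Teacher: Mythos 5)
Your proof is correct, but it takes a different route from the paper's. The paper argues directly and without any coordinate change: it observes that $\Derlog{X}{k}_0$ is the kernel of the $\C$-linear functional $\eta\mapsto\lrb{\eta,\dd h_1\wedge\dots\wedge\dd h_k}$ on $\lrp{\bigwedge^k\Der}_0\simeq\C^{\binom{n}{k}}$, hence a hyperplane; it then takes $\delta$ with $\lrb{\delta,\dd h_1\wedge\dots\wedge\dd h_k}=\lambda\neq 0$, writes an arbitrary $\nu\in\Derlog{X}{k}$ as $a\delta+\sum_{\eta\in\mc{B}}b_\eta\eta$ over the $S$-basis $\lra{\delta}\cup\mc{B}$ of $\bigwedge^k\Der$, and reads off $\lambda a\in I_X$, i.e.\ $a\in I_X=\lrb{h_1,\ldots,h_k}$, which yields the generating set in one stroke. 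You instead reduce to the coordinate-subspace case via a linear change of coordinates, import the known minimal generating set and generator count from Example~\ref{ex:coord}, and then perform a change of generators ($\delta=c\delta_0+\eta$) together with a cardinality count to establish minimality. Both arguments are sound; the paper's is shorter and self-contained (it never needs Example~\ref{ex:coord} and handles minimality implicitly via the direct description of $\Derlog{X}{k}$ as $I_X\delta\oplus\langle\mc{B}\rangle_S$), whereas yours makes the minimality claim completely explicit by matching the rank of the zeroth term of the minimal free resolution, at the cost of the extra bookkeeping through the coordinate change. Your identification of $\Derlog{X}{k}_0$ and the hyperplane observation coincide with the paper's Equation~\eqref{eq:der0}, which is also the ingredient the rest of the section reuses.
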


\begin{proof} Let $N=\binom{n}{k}.$
 By definition the following holds: \begin{equation}\label{eq:der0}
		\Derlog{X}{k}_0=\lra{\eta \in \lrp{\bigwedge^k\Der}_0 \mid \langle \eta, \dd h_{1}\wedge \dots\wedge \dd h_{k}\rangle =0}.
	\end{equation}
	Since the $h_i$ are linear forms, Equation~\eqref{eq:der0} is equivalent to saying that $\Derlog{X}{k}_0$ can be considered as a hyperplane in $\lrp{\bigwedge^k\Der}_0 \simeq \C^{N},$ hence $\dim_\C \Derlog{X}{k}_0= N-1.$ Denote by $\mathcal B$ a basis of $\Derlog{X}{k}_0.$  There exists a $ \delta \in \lrp{\bigwedge^k\Der}_0\setminus \Derlog{X}{k}_0,$ such that $\langle \delta, \dd h_{1}\wedge \dots\wedge \dd h_{k}\rangle=:\lambda \in \C\setminus\{0\}.$ Let $\nu \in \Derlog{X}{k}$ be arbitrary. Then, by the previous considerations, we can write \[\nu = a \delta +\sum_{\eta \in \mathcal{B}}b_\eta \eta,\] where $a, b_\eta \in S.$
	We obtain\[
	\langle \nu, \dd h_{1}\wedge \dots\wedge \dd h_{k}\rangle= a\langle \delta, \dd h_{1}\wedge \dots\wedge \dd h_{k}\rangle + \sum_{\eta \in \mathcal{B}}b_\eta \langle \eta, \dd h_{1}\wedge \dots\wedge \dd h_{k}\rangle=\lambda \cdot  a  \in I_X.
	\]
	This implies $a \in I_X$, hence $\Derlog{X}{k}$ is minimally generated by \[\mc{B}\cup\lra{h_i \delta\mid i\in\lra{1,\ldots,k}}.\]
\end{proof}

\begin{nota}
	Let $h=(h_1,\ldots,h_k)\in S^k$. We denote by $\Jac(h)$ the Jacobian matrix of $h$.
\end{nota}
Using an explicit coordinate change, one can refine Lemma \ref{lem:one:space} as follows:
\begin{remar}
	Let $h_1,\ldots,h_k$ be $k$ linear forms defining a vector subspace $X$ of codimension $k$. Let $\lra{i_1<\ldots< i_k}\subseteq \lra{1,\ldots,n}$. We assume that the $k\times k$ minor of $\Jac(h)$ relative to the columns indexed by $i_1,\ldots,i_k$ is non-zero. Then a minimal generating set of $\Derlog{X}{k}$ is of the form: 
	
	\begin{equation}
		\label{eq:gen:one:space}
		\lra{h_i \dr_{x_{i_1}}\wedge\dots\wedge \dr_{x_{i_k}} \mid i\in\lra{1,\ldots,k}}\cup \lra{ \delta_2,\ \ldots,\ \delta_{\binom{n}{k}-1}},
	\end{equation}
	where for $i\in\lra{2,\ldots,\binom{n}{k}-1}$, $\delta_i$ is homogeneous of degree $0$ and such that \linebreak $\lra{\dr_{x_{i_1}}\wedge\dots\wedge \dr_{x_{i_k}}, \delta_2,\ldots,\delta_{\binom{n}{k}}}$ is a basis of $\bigwedge^k \Der$.
	\label{remar:supp:one:space}
	
\end{remar}
\begin{lem}\label{linalglemma}
	Let $R$ be a graded ring and $F$ be a free graded $R$-module of rank $n \in \NN_{>0}$ with bases $\mathcal B=\{b_1,\ldots,b_n\}$ and $\mathcal C=\{c_1,\ldots,c_n\}.$ For $k\in \{1, \ldots, n-1\},$ let $I, I_1, \ldots, I_k \subseteq R$ be homogeneous ideals. 
	Define the graded modules $V=\bigoplus_{i=1}^kI_i b_i \oplus \bigoplus_{j=k+1}^n Rb_j$ and $W = Ic_1 \oplus \bigoplus_{i=2}^n R c_i.$ If $\dim_\C (V_0\cap W_0)=n-k-1,$ then there exists a basis $\mathcal B'=\{b_1', \ldots, b_n'\}$ of $F,$ such that:
	\[V \cap W= \bigoplus_{i=1}^kI_i b_i' \oplus I b_{k+1}' \oplus \bigoplus_{j=k+2}^n Ab_j'.\]
\end{lem}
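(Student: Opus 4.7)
The plan is to produce $\mathcal{B}'$ from $\mathcal{B}$ by a single elementary change of basis involving only the $(k+1)$-st basis vector. The degree-$0$ hypothesis is precisely what pinpoints, after a permutation, the direction along which to perform this change.

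\textbf{Degree-$0$ analysis.} First I would observe that, since $R_0 = \CC$ and the ideals $I, I_1, \dots, I_k$ are proper (hence lie in $R_+$), one has $V_0 = \bigoplus_{j=k+1}^n \CC\, b_j$ and $W_0 = \bigoplus_{i=2}^n \CC\, c_i$, of $\CC$-dimensions $n-k$ and $n-1$ respectively. Combined with $\dim_\CC(V_0 \cap W_0) = n-k-1$, inclusion/exclusion forces $V_0 + W_0 = F_0$. Let $\pi \colon F \twoheadrightarrow F/\bigoplus_{i \geqslant 2} R c_i \cong R$ be the projection onto the $c_1$-coordinate; the equality $V_0 + W_0 = F_0$ says that $\pi|_{V_0} \colon V_0 \to \CC$ is surjective, so there exists $j \in \{k+1,\dots,n\}$ with $\pi(b_j) \neq 0$. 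After a harmless relabelling I assume $j = k+1$, and set $\lambda := \pi(b_{k+1}) \in \CC^\times$ and $\lambda_i := \pi(b_i) \in \CC$.

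\textbf{Construction of $\mathcal{B}'$ and verification.} I would then define
\[
b_{k+1}' := b_{k+1}, \qquad b_i' := b_i - \frac{\lambda_i}{\lambda}\, b_{k+1} \text{ for } i \neq k+1.
\]
The change-of-basis matrix is the identity plus a single non-zero off-diagonal row, hence unimodular, so $\mathcal{B}' = \{b_1', \dots, b_n'\}$ is a basis of $F$ consisting of degree-$0$ elements. By construction $\pi(b_i') = 0$ for $i \neq k+1$ and $\pi(b_{k+1}') = \lambda$. Rewriting a general element of $V$ in $\mathcal{B}'$ via $b_i = b_i' + (\lambda_i/\lambda)\, b_{k+1}'$ (and $b_{k+1} = b_{k+1}'$), a short direct computation gives
\[
V = \bigoplus_{i=1}^k I_i\, b_i' \;\oplus\; R\, b_{k+1}' \;\oplus\; \bigoplus_{j=k+2}^n R\, b_j',
\]
the key point being that the $b_{k+1}'$-coefficient of a general element of $V$ absorbs every ``leftover'' contribution via the free parameter $g_{k+1} \in R$, and therefore ranges freely over all of $R$. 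Since $\pi(v) = \lambda \cdot (\text{coefficient of } b_{k+1}')$ and $\lambda \in \CC^\times \subseteq R^\times$, membership $v \in W$, i.e.\ $\pi(v) \in I$, is equivalent to the $b_{k+1}'$-coefficient lying in $I$. This yields the desired decomposition of $V \cap W$.

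\textbf{Main difficulty.} The only subtle point is the bookkeeping in the third step: one must verify that after the elementary change of basis the coefficient of $b_{k+1}'$ genuinely decouples from the others, so that no hidden conditions are imposed on the $I_i b_i'$- or $R b_j'$-summands. This is a short but careful manipulation once one notes that the elementary transformation affects only row $k+1$. Beyond this formal check, the whole argument rests on the single non-formal ingredient provided by the dimension hypothesis: the existence of some $j \geqslant k+1$ with $\pi(b_j) \neq 0$.
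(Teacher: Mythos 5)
Your proof is correct and follows essentially the same route as the paper's: both use the dimension hypothesis to locate, after renumbering, a $b_{k+1}$ whose $c_1$-coordinate is a unit, then perform the elementary basis change $b_i \mapsto b_i - (\lambda_i/\lambda)\,b_{k+1}$ (the paper phrases this as writing $b_i = a_i b_{k+1} + w_i$ with $w_i \in \langle W_0\rangle$) and read off the intersection coordinatewise in the new basis. The only cosmetic difference is that you work with the projection onto the $c_1$-coordinate where the paper works with the quotient $F/\langle W_0\rangle$.
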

\begin{proof}
	Let $V'=\langle V_0 \rangle$ and $W'=\langle W_0 \rangle.$
	After renumbering the $b_i$ with index $i\geqslant k+1,$ we can assume $b_{k+1} \notin V_0\cap W_0.$ Then $\overline{\mathcal{B}} = \{\overline b_{k+1}\}$ is a basis of $F/W',$ which yields the existence of $a_i \in R$ and $w_i \in W',$ such that $b_i=a_ib_{k+1}+w_i$ for $i \in \{1, \ldots, k, k+2, \ldots, n\}$ and the existence of a unit $a_{k+1} \in R$ and of $w_{k+1} \in W'$ with $c_1=a_{k+1}b_{k+1}+w_{k+1}.$ This implies that $\mathcal B'=\{w_1,\ldots,w_k,b_{k+1},w_{k+2},\ldots,w_n\}$ is a basis of $F.$ We obtain 
	\[V=\bigoplus_{i=1}^kI_i w_i \oplus Rb_{k+1} \oplus \bigoplus_{j=k+2}^n Rw_j\] and \[W = \bigoplus_{i=1}^kR w_i \oplus Ib_{k+1} \oplus \bigoplus_{j=k+2}^n Rw_j.\]
	Then \[V\cap W = \bigoplus_{i=1}^kI_i w_i \oplus I b_{k+1} \oplus \bigoplus_{j=k+2}^n Rw_j.\]
\end{proof}


\begin{proof}[Proof of Theorem~\ref{theo:gen:free}]
	Let us prove Theorem~\ref{theo:gen:free} by induction. The initialization for $s=1$ is given by Lemma~\ref{lem:one:space}. Let $N=\binom{n}{k}$ and $s\in\lra{1,\ldots,N-1}$.
	
	We assume that $X_1,\ldots,X_{s+1}$ are linear subspaces of $\C^n$ of codimension $k$ which are in generic position.
	
	Let $X=\bigcup_{i=1}^s X_i$, $V=\Derlog{X}{k}, W=\Derlog{X_{s+1}}{k}$ and $F=S^N.$ By the induction hypothesis, $\dim_\C V_0=N-s$ and by Lemma~\ref{lem:one:space}, $\dim_\C W_0=N-1$. 
	Then $\dim_\C V_0\cap W_0=N-s-1$ follows from the genericity of the subspace arrangement. 
	By Proposition \ref{prop:inter} it holds that\[\Derlog{\left(\bigcup_{i=1}^{s+1}X_i\right)}{k}=V\cap W.\]  Then Lemma \ref{linalglemma} yields the result. 
\end{proof}

\begin{proof}[Proof of Corollary~\ref{cor:free}]
Let $\lra{\delta_1,\ldots,\delta_{N}}$ be a basis of $\bigwedge^k\Der$ such that a minimal generating set of $\Derlog{X}{k}$ is given by Equation~\eqref{eq:theo}. Since for all $i\in\lra{1,\ldots,s}$, $(h_{i,1},\ldots,h_{i,k})$ is a regular sequence, a minimal free resolution of the ideal $\lrb{h_{i,1},\ldots,h_{i,k}}$ is given by the truncated Koszul complex $\wt{K}_i:=\wt{K}(h_{i,1},\ldots,h_{i,k})$. Since 
\[
\Derlog{X}{k}=\bigoplus_{i=1}^s \lrb{h_{i,1},\ldots,h_{i,k}}\delta_i \oplus \bigoplus_{i=s+1}^N S \delta_i,
\]
we deduce that a minimal free resolution of $\Derlog{X}{k}$ is
\[
\wt{K}_1\oplus\dots\oplus\wt{K}_s\oplus \bigoplus_{i=s+1}^N \mc{C}
\]
where $\mc{C}$ is defined as in Notation~\ref{nota:kos}. 
Thus, the projective dimension of $\Derlog{X}{k}$ is $k-1$ and $X$ is free. 
\end{proof}

The following example shows that there exist subspace arrangements that are not, up to linear change of coordinates, unions of coordinate subspaces and that the genericity assumption cannot be dropped in Theorem~\ref{theo:gen:free}.
\begin{ex}\ 
	\begin{enumerate}
		\item Let us consider the generic subspace arrangement \[X=V(x,y)\cup V(z,t) \cup V(x-z,y-t) \in \C^4.\]
		We notice that the intersection of two of the components is always $0$-dimensional. If, up to a linear change of coordinates, the subspace arrangement would be a union of coordinate subspaces, this could not occur.
		Using this approach one can construct further examples of generic subspace arrangements, which are not union of coordinate subspaces in arbitrary dimensions.
		\item Let us consider the subspace arrangement $Y$ defined by the equations $h_1=xy(x-y+z-t)$ and $h_2=zt$. It is the union of $6$ planes in $\C^4$. Computations using \textsc{Singular} show that $Y$ is not free, since a minimal free resolution is given by:
		\[
		0\to S\to S^5\to S^{10}\to \Derlog{Y}{2}\to 0.
		\]
	\end{enumerate}
\end{ex}

Equation~\eqref{eq:der0} in the proof of Lemma \ref{lem:one:space} gives a correspondence between subspaces of codimension $k$ in $\C^n$ and some hyperplanes in $\C^{\binom{n}{k}}.$ Using this correspondence we can associate a hyperplane arrangement to any subspace arrangement. 
In the following example we investigate if there is a relation between the freeness of a subspace arrangement and the freeness of its associated hyperplane arrangement.

\begin{ex} \ 
	\begin{enumerate}
		\item We consider the subspace arrangement 
		\[X=V(x,y-z)\cup V(y,x+z)\cup V(x,y-t)\cup V(y,x+t)\cup V(x-y,z)\cup V(x,z-t)\cup V(x+t,z) \subseteq \C^4.\]
		By Equation~\eqref{eq:der0} we associate the hyperplane arrangement
		\[Y=V(x_1-x_2)\cup V(x_1-x_3)\cup V(x_1-x_4)\cup V(x_1-x_5)\cup V(x_2-x_3)\cup V(x_2-x_4)\cup V(x_2-x_6)\subseteq \C^6.\]
		Using \textsc{Singular} we can show that both $X$ and $Y$ are free.
		
		\smallskip
		
		One can show that for example the hyperplane $V(x_1-x_6)$ cannot be associated to a subspace of codimension $2$ in $\C^4,$ hence not all hyperplane arrangements in $\C^6$ can arise from subspace arrangements in this way.
		\item We consider the subspace arrangement 
		\[X= V(x,y)\cup V(x,z)\cup V(y,z)\cup V(x-z,y+z) \subseteq \C^3.\]
		By Equation~\eqref{eq:der0} we associate the hyperplane arrangement
		\[Y=V(x)\cup V(y)\cup V(z)\cup V(x+y+z) \subseteq \C^3.\]
		Since $\dim(X)=1,$ we obtain that $X$ is free, but a \textsc{Singular} computation shows that $Y$ is not free.
	\end{enumerate}
\end{ex}

\begin{remar}
The condition on the number of subspaces in Theorem~\ref{theo:gen:free} cannot be dropped, as we observed by considering randomly generated examples with more than $\binom{n}{k}$ subspaces with \textsc{Singular}.
\end{remar}

\section{Constructing free singularities via products}
\label{construction}
In this section we describe two ways of constructing new free singularities from known free singularities via two kinds of products: scheme-theoretic products and a generalization of the product in the sense of hyperplane arrangements.

\begin{nota}
Let $S_1=\CC\lra{x_1,\ldots,x_{n_1}}$ and $S_2= \CC\lra{y_1,\ldots,y_{n_2}}$. For the sake of simplicity, a germ of analytic space $(X,0)$  will be denoted by $X$. 

We set $S=S_1\hat{\otimes} S_2\simeq \C\lra{x_1,\ldots,x_{n_1},y_1,\ldots,y_{n_2}}.$
\end{nota}

\begin{nota}
\label{nota:X:Y}
The following notations are fixed in this section.

For $i\in\lra{1,2}$ let $X_i\subseteq \C^{n_i}$ be a reduced Cohen-Macaulay subspace of codimension $k_i$ and $(f_{i,1},\ldots,f_{i,k_i})\subseteq S_i$ be the equations of a reduced complete intersection $C_i$ of codimension $k_i$ containing $X_i$.  
\end{nota}

The next lemma recalls basic properties of analytic tensor products which will be used after.

\begin{lem}[\protect{\cite[Kapitel III \S 5 Satz 10, Satz 17, Satz 19]{GR71}}]
\label{lem:tens}
Let $R_1$ and $R_2$ be two analytic $\C$-algebras and $R=R_1\hat{\otimes} R_2$. 
Let $M_i$ be an $R_i$-module for $i\in\lra{1,2}$. Then 
\begin{enumerate}
\item $\depth_R (M_1\otimes M_2)=\depth_{R_1}(M_1)+\depth_{R_2}(M_2)$,
\item $\dim_R (M_1\otimes M_2)=\dim_{R_1}(M_1)+\dim_{R_2}(M_2)$.
\item $R_1$ and $R_2$ are reduced if and only if $R$ is reduced.
\end{enumerate}
\end{lem}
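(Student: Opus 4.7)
The lemma is attributed to Grauert--Remmert, so my plan is to outline a self-contained argument that takes as input only the single foundational fact that $R=R_1\hat{\otimes}R_2$ is a Noetherian local $\C$-algebra which is faithfully flat over both $R_1$ and $R_2$, with maximal ideal $\m=\m_1R+\m_2R$ and residue field $\C$. Everything else I would build on top of this.

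For the depth formula (1), I would pick maximal regular sequences $(a_1,\ldots,a_p)\subseteq\m_1$ on $M_1$ and $(b_1,\ldots,b_q)\subseteq\m_2$ on $M_2$, where $p=\depth_{R_1}M_1$ and $q=\depth_{R_2}M_2$. Flatness of $R$ over $R_1$ shows that $(a_1,\ldots,a_p)$ is $(M_1\otimes M_2)$-regular, and quotienting yields $(M_1/(a_1,\ldots,a_p)M_1)\otimes M_2$; a symmetric flatness argument then makes $(b_1,\ldots,b_q)$ continue the sequence. This gives $\depth_R(M_1\otimes M_2)\geq p+q$. For the reverse inequality I would use the Ext characterization $\depth=\min\{i:\mathrm{Ext}^i_R(\C,-)\neq 0\}$ and a Künneth formula across the analytic tensor product, $\mathrm{Ext}^\ast_R(\C,M_1\otimes M_2)\cong \mathrm{Ext}^\ast_{R_1}(\C,M_1)\otimes \mathrm{Ext}^\ast_{R_2}(\C,M_2)$, to conclude the sum formula at the level of vanishing indices.

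For the dimension formula (2), I would invoke the chain-of-primes description of Krull dimension on the supports. A maximal chain of primes in $\mathrm{Supp}_R(M_1\otimes M_2)$ decomposes, via faithful flatness of $R$ over $R_1$, into a chain lying over one in $\mathrm{Supp}_{R_1}(M_1)$ together with a chain in the fiber over its generic point; the latter fiber is essentially governed by $R_2$ and contributes $\dim M_2$ by symmetry. Combining the two halves yields $\dim_R(M_1\otimes M_2)=\dim_{R_1}M_1+\dim_{R_2}M_2$.

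Finally, for (3) the forward direction is immediate from faithful flatness, which forces $R_1,R_2\hookrightarrow R$, so a reduced $R$ has reduced factors. For the converse, I would reduce to $R_1,R_2$ reduced and exploit that $\C$ is perfect and algebraically closed, so that reducedness is geometric and the analytic tensor product of reduced $\C$-algebras stays reduced; concretely, the analytic spectrum of $R$ is a local model for a product of reduced analytic germs. The main obstacles are the Ext-Künneth formula of (1) and the reducedness preservation of (3); both hinge on delicate exactness properties of the analytic tensor product that are proved in Grauert--Remmert, which is exactly why in practice one cites the reference rather than reproving them from scratch.
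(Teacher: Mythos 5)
The paper does not actually prove this lemma: it is quoted verbatim from Grauert--Remmert \cite{GR71} (Kapitel III, \S 5, S\"atze 10, 17, 19) and used as a black box, so there is no internal argument to measure yours against. Taken on its own terms, your outline is the standard roadmap and identifies the correct ingredients: faithful flatness of $R=R_1\hat{\otimes}R_2$ over each factor to concatenate maximal regular sequences (giving $\depth \geqslant p+q$), a fiber/chain-of-primes decomposition for the dimension formula, injectivity $R_i\hookrightarrow R$ for the easy direction of (3), and geometric reducedness over the perfect field $\C$ for the converse. What it buys over the paper's approach is a transparent reduction of the lemma to two genuinely analytic facts; what it does not do is prove those facts, and they are precisely the content of the cited S\"atze. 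Concretely: the Ext--K\"unneth isomorphism $\mathrm{Ext}^{\ast}_R(\C,M_1\otimes M_2)\cong \mathrm{Ext}^{\ast}_{R_1}(\C,M_1)\otimes \mathrm{Ext}^{\ast}_{R_2}(\C,M_2)$ requires exactness properties of $\hat{\otimes}$ that are not formal (a lighter route for the upper bound is to kill the two regular sequences and then exhibit $\C=\C\hat{\otimes}\C\hookrightarrow M_1\otimes M_2$ directly from embeddings $\C\hookrightarrow M_i$, using only flatness of $R$ over $R_1$ and $R_2$, which avoids K\"unneth altogether); the preservation of reducedness under $\hat{\otimes}$ is likewise a theorem of Grauert--Remmert rather than a consequence of $\C$ being perfect in any purely algebraic sense, since $M_1\otimes M_2$ here is a completed tensor product, not the algebraic one. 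You also implicitly assume the $M_i$ are finitely generated and nonzero, which the statement needs for $\depth$ and $\dim$ to behave as claimed. So your proposal is a correct and useful skeleton, but as a proof it defers to \cite{GR71} at the same two points the paper does; it is an expansion of the citation, not a replacement for it.
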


It follows that:
\begin{cor}
\label{cor:CM:prod}
With the hypothesis of Notation~\ref{nota:X:Y}, the product $X_1\times X_2\subseteq \C^{n_1}\times \C^{n_2}$ is a reduced Cohen-Macaulay subspace. 
\end{cor}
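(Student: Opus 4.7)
The plan is to simply combine the three parts of Lemma~\ref{lem:tens} applied to the coordinate rings. Writing $R_i = S_i/I_{X_i}$, one first identifies the coordinate ring of $X_1 \times X_2$ in $\CC^{n_1}\times \CC^{n_2}$ with the analytic tensor product $R = R_1 \hat{\otimes} R_2$, which is standard since $I_{X_1 \times X_2} = I_{X_1} S + I_{X_2} S$.

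Next I would verify the three required properties in turn. Reducedness of $R$ is immediate from Lemma~\ref{lem:tens}(3) since both $R_1$ and $R_2$ are reduced by assumption. For the codimension, applying Lemma~\ref{lem:tens}(2) to $M_i = R_i$ gives
\[
\dim_R R = \dim_{R_1} R_1 + \dim_{R_2} R_2 = (n_1 - k_1) + (n_2 - k_2),
\]
so $X_1 \times X_2$ has codimension $k_1 + k_2$ in $\CC^{n_1 + n_2}$. For the Cohen-Macaulay property, applying Lemma~\ref{lem:tens}(1) yields
\[
\depth_R R = \depth_{R_1} R_1 + \depth_{R_2} R_2,
\]
and since each $X_i$ is Cohen-Macaulay, $\depth_{R_i} R_i = \dim_{R_i} R_i$; combining with the dimension formula gives $\depth_R R = \dim_R R$, i.e., $R$ is Cohen-Macaulay.

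Finally I would note that $R$ is equidimensional: a Cohen-Macaulay analytic algebra has all minimal primes of the same dimension, and the minimal primes of $R_1 \hat{\otimes} R_2$ correspond to pairs of minimal primes of the factors, whose dimensions add. Hence $X_1 \times X_2$ is a reduced equidimensional Cohen-Macaulay subspace of codimension $k_1 + k_2$, as required.

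There is no real obstacle here; the entire content is packaged in Lemma~\ref{lem:tens}, and the only thing one must be careful about is confirming that the coordinate ring of the product is the analytic tensor product of the coordinate rings of the factors, so that the lemma applies directly.
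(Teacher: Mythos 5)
Your proposal is correct and matches the paper's (implicit) argument exactly: the corollary is stated as an immediate consequence of Lemma~\ref{lem:tens}, obtained by identifying the coordinate ring of $X_1\times X_2$ with $R_1\hat{\otimes}R_2$ and reading off reducedness from part (3) and the equality of depth and dimension from parts (1) and (2). Nothing further is needed.
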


\begin{remar}
	Let $X \subseteq \C^n$ be a reduced Cohen-Macaulay subspace. 
	The freeness of $X$ is independent of the embedding in the following sense: \newline 
	Let $p \in \NN.$ If $X$ is free, then Lemma~\ref{lem:tens} and Proposition~\ref{prop:free:sing:loc} implies that $Y=X \times {(0,\ldots,0)} \subseteq \C^n\times \C^p$ is free. 
\end{remar}

\begin{nota}
We define $X:=X_1\times X_2$.
A reduced complete intersection $C$ containing $X$ is defined by the regular sequence $(f_{1,1},\ldots,f_{1,k_1},f_{2,1},\ldots,f_{2,k_2})\subseteq S$. In particular, $\mathrm{codim}(X)=\mathrm{codim}(C)=k_1+k_2$ and $J_{C}=SJ_{C_1}\cdot SJ_{C_2}$.
\end{nota}

The main result of this section is:

\begin{theo}
\label{theo:prod}
Let $X_1\subseteq \C^{n_1}$ and $X_2\subseteq\C^{n_2}$ be reduced Cohen-Macaulay subspaces and $X=X_1\times X_2\subseteq \C^{n_1}\times\C^{n_2}$. Then $X_1$ and $X_2$ are free if and only if $X$ is free.
\end{theo}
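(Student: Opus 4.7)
The plan is to apply Proposition~\ref{prop:free:sing:loc} to $X, X_1, X_2$, reducing the question to the structure of $S/J_{X/C}$. Set $R = S/I_X$, $A = S_1/I_{X_1}$ and $B = S_2/I_{X_2}$, so that $R \cong A\hat{\otimes} B$. Let $\bar J_i \subseteq R$ denote the extension of $J_{C_i}$, and write $\bar J_1^A = J_{X_1/C_1}/I_{X_1} \subseteq A$, $\bar J_2^B = J_{X_2/C_2}/I_{X_2} \subseteq B$. The Jacobian matrix of the concatenated regular sequence $(f_{1,1},\ldots,f_{1,k_1},f_{2,1},\ldots,f_{2,k_2})$ is block diagonal, so its nonvanishing maximal minors are precisely products of maximal minors of the two blocks; hence $J_C = J_{C_1}\cdot J_{C_2}$ in $S$ and therefore
\[
S/J_{X/C} \;\cong\; R/(\bar J_1\bar J_2).
\]

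The key structural input is a Mayer--Vietoris-type short exact sequence
\[
0 \to R/(\bar J_1\bar J_2) \to R/\bar J_1 \oplus R/\bar J_2 \to R/(\bar J_1+\bar J_2) \to 0.
\]
To obtain it I would first check $\mathrm{Tor}_1^R(R/\bar J_1, R/\bar J_2) = 0$ (which implies $\bar J_1 \cap \bar J_2 = \bar J_1\bar J_2$): base-changing a free $A$-resolution of $A/\bar J_1^A$ along the flat map $A \to R$ produces a free $R$-resolution of $R/\bar J_1$, and tensoring the latter over $R$ with $R/\bar J_2 \cong A\hat{\otimes}(B/\bar J_2^B)$ gives a complex whose homology is concentrated in degree zero. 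The three non-middle terms are then
\[
R/\bar J_1 \cong (A/\bar J_1^A)\hat{\otimes} B, \quad R/\bar J_2 \cong A\hat{\otimes}(B/\bar J_2^B), \quad R/(\bar J_1+\bar J_2) \cong (A/\bar J_1^A)\hat{\otimes}(B/\bar J_2^B),
\]
so Lemma~\ref{lem:tens} computes all their dimensions and depths as sums.

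For the direction $X_1, X_2$ free $\Rightarrow X$ free, I would split on whether either of $A/\bar J_1^A$, $B/\bar J_2^B$ vanishes. If one does, the exact sequence collapses and the conclusion follows directly from Lemma~\ref{lem:tens}. Otherwise both are Cohen-Macaulay of the expected dimensions $n_i-k_i-1$, so $R/\bar J_i$ is Cohen-Macaulay of dimension $n-k-1$ and $R/(\bar J_1+\bar J_2)$ is Cohen-Macaulay of dimension $n-k-2$; the depth lemma applied to the short exact sequence together with $\depth \leq \dim$ then forces $R/(\bar J_1\bar J_2)$ to be Cohen-Macaulay of dimension $n-k-1$.

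The converse direction is the technical core and the main obstacle. The case $R/(\bar J_1\bar J_2) = 0$ is settled by a maximal-ideal argument in the local setting: $\bar J_1\bar J_2 = R$ forces some product $u\cdot v$ with $u \in \bar J_1^A$ and $v \in \bar J_2^B$ to be a unit, so both $\bar J_i^A$ equal the ambient rings. When $R/(\bar J_1\bar J_2)$ is Cohen-Macaulay of dimension $n-k-1$, equidimensionality of $X_i$ combined with $\dim\Sing(C_i) < \dim X_i$ yields the automatic bounds $\dim(A/\bar J_1^A) \leq n_1-k_1-1$ and $\dim(B/\bar J_2^B) \leq n_2-k_2-1$. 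Writing $a = \depth(A/\bar J_1^A)$ and $b = \depth(B/\bar J_2^B)$, the depth inequality
\[
\depth R/(\bar J_1+\bar J_2) \;\geq\; \min\bigl(\depth(R/\bar J_1 \oplus R/\bar J_2),\ \depth R/(\bar J_1\bar J_2) - 1\bigr)
\]
applied to the displayed sequence becomes, via Lemma~\ref{lem:tens}, a purely numerical inequality in $a, b, n_1-k_1, n_2-k_2$ which forces $a = n_1-k_1-1$ and $b = n_2-k_2-1$. Combined with the dimension upper bounds, this yields Cohen-Macaulayness of both $A/\bar J_1^A$ and $B/\bar J_2^B$ of the expected dimensions, and Proposition~\ref{prop:free:sing:loc} concludes that $X_1$ and $X_2$ are free.
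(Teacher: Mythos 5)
Your proposal is correct and follows essentially the same route as the paper: reduce via Proposition~\ref{prop:free:sing:loc} to the Cohen--Macaulayness of $S/J_{X/C}$, identify $J_{X/C}$ modulo $I_X$ with the product (equal to the intersection) of the two extended Jacobian-type ideals, and run the Mayer--Vietoris sequence through the depth lemma and the additivity of depth and dimension under analytic tensor products (Lemma~\ref{lem:tens}); the paper packages the numerical bookkeeping as Lemma~\ref{lem:summands:inter} and Proposition~\ref{prop:CM}, and cites \cite{GR71} for $RI\cdot RJ=RI\cap RJ$ where you rederive it by flat base change. The only point to tidy is the converse when exactly one of the ideals $J_{X_i/C_i}$ is the unit ideal (so your depth inequality becomes vacuous because some terms vanish); this is handled by the same one-line tensor-product argument you already use in the forward direction, as in the paper's final case distinction.
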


\begin{remar}
In particular, if $X_1$ and $X_2$ are hypersurfaces, then $X_1$ and $X_2$ are free divisors if and only if $X_1\times X_2$ is a free complete intersection of codimension~$2$. 
\end{remar}

We will need the following results.

\begin{lem}[\protect{\cite[Lemma 6.5.18]{dejong}}]
\label{lem:depth}
Let $R$ be a local Noetherian ring and consider a short exact sequence of $R$-modules : 
$$0\to M_1\to M_2\to M_3\to 0.$$
Then $$\depth(M_2) \geqslant \min\lrp{\depth(M_1),\depth(M_3)}.$$
In case this inequality is strict, we have $\depth(M_1)=\depth(M_3)+1$.
\end{lem}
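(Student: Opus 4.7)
The plan is to prove this classical depth lemma via the Ext characterization of depth: for a finitely generated module $M$ over the local Noetherian ring $(R,\m)$ with residue field $k = R/\m$, one has
\[
\depth(M) = \inf\lra{i\in\NN \mid \mathrm{Ext}^i_R(k,M)\neq 0},
\]
with the convention that the infimum is $+\infty$ if all Ext modules vanish. (Strictly speaking this characterization applies to finite modules; I will implicitly assume this is the context in which the lemma is used, as is standard.) Applying the functor $\mathrm{Hom}_R(k,-)$ to the short exact sequence yields the long exact sequence
\[
\cdots \to \mathrm{Ext}^{i-1}(k,M_3) \to \mathrm{Ext}^i(k,M_1) \to \mathrm{Ext}^i(k,M_2) \to \mathrm{Ext}^i(k,M_3) \to \mathrm{Ext}^{i+1}(k,M_1) \to \cdots
\]
which will drive the entire proof.

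Set $d_j = \depth(M_j)$ and $d = \min(d_1,d_3)$. For the first assertion, if $i < d$ then both $\mathrm{Ext}^i(k,M_1)$ and $\mathrm{Ext}^i(k,M_3)$ vanish, and by exactness of the middle stretch $\mathrm{Ext}^i(k,M_1) \to \mathrm{Ext}^i(k,M_2) \to \mathrm{Ext}^i(k,M_3)$ we get $\mathrm{Ext}^i(k,M_2) = 0$. Hence $d_2 \geqslant d$.

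For the strict inequality statement, assume $d_2 > d$, so in particular $\mathrm{Ext}^d(k,M_2)=0$. I will argue by cases on the relative size of $d_1$ and $d_3$ and show that the only possibility is $d_1 = d_3 + 1$. If $d_1 \leqslant d_3$, then $d = d_1$ and $\mathrm{Ext}^{d-1}(k,M_3)=0$ (since $d-1 < d_1 \leqslant d_3$), so the portion $\mathrm{Ext}^{d-1}(k,M_3) \to \mathrm{Ext}^d(k,M_1) \to \mathrm{Ext}^d(k,M_2)$ shows $\mathrm{Ext}^d(k,M_1)$ injects into $\mathrm{Ext}^d(k,M_2)=0$, forcing $\mathrm{Ext}^d(k,M_1)=0$, which contradicts $d=d_1$. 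Hence we must have $d_1 > d_3$, so $d = d_3$. Now consider $\mathrm{Ext}^d(k,M_2) \to \mathrm{Ext}^d(k,M_3) \to \mathrm{Ext}^{d+1}(k,M_1)$: the first term vanishes and $\mathrm{Ext}^d(k,M_3) \neq 0$, so the connecting map into $\mathrm{Ext}^{d+1}(k,M_1)$ is injective, forcing $\mathrm{Ext}^{d+1}(k,M_1)\neq 0$ and thus $d_1 \leqslant d+1 = d_3+1$. Combined with $d_1 > d_3$ this gives $d_1 = d_3 + 1$.

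The argument is essentially a bookkeeping exercise in the long exact Ext sequence, and the only mildly delicate point is being careful in the case analysis so as not to conflate the two directions of the inequality between $d_1$ and $d_3$. No further machinery is needed; the only ingredient beyond the long exact sequence is the Ext characterization of depth, which is the standard characterization in Bruns--Herzog or Matsumura and can be cited.
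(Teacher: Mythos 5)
The paper does not prove this lemma at all; it is quoted verbatim from \cite[Lemma 6.5.18]{dejong} (the classical ``depth lemma'') and used as a black box, so there is no in-paper argument to compare against. Your proof via the characterization $\depth(M)=\inf\lra{i \mid \mathrm{Ext}^i_R(k,M)\neq 0}$ and the long exact Ext sequence is the standard one and is correct: the vanishing argument for $i<\min(d_1,d_3)$ gives the inequality, and your two-case analysis (ruling out $d_1\leqslant d_3$ by injectivity of $\mathrm{Ext}^{d}(k,M_1)\to\mathrm{Ext}^{d}(k,M_2)$, then using injectivity of the connecting map $\mathrm{Ext}^{d}(k,M_3)\to\mathrm{Ext}^{d+1}(k,M_1)$) correctly forces $d_1=d_3+1$ in the strict case. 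The only caveats are the ones you already flag (finite generation for the Ext characterization) plus the degenerate situation where a module is zero and a depth is $+\infty$, which cannot occur in the strict-inequality case since $M_1=0$ or $M_3=0$ forces $d_2=\min(d_1,d_3)$; this is worth a sentence but does not affect correctness.
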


\begin{lem}
\label{lem:summands:inter}
Let $R_1$ and $R_2$ be two analytic $\C$-algebras and $R=R_1\hat{\otimes} R_2$. Let $I\subseteq R_1$ and $J\subseteq R_2$. We assume that $\depth\lrp{R_1/I}<\depth(R_1)$ and $\depth\lrp{R_2/J}<\depth(R_2)$. Then:
\begin{enumerate}
\item \label{I+J} $\depth\lrp{R/(RI+RJ)}=\depth\lrp{R_1/I}+\depth\lrp{R_2/J}$,
\item $\depth\lrp{R/(RI\cap RJ)}=\depth\lrp{R_1/I}+\depth\lrp{R_2/J}+1$.
\end{enumerate}
\end{lem}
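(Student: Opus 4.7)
The plan is to handle~(1) directly via an identification with an analytic tensor product, and then deduce~(2) by feeding a Mayer--Vietoris short exact sequence into Lemma~\ref{lem:depth}.

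For~(1), I would use right-exactness of the analytic tensor product to obtain a canonical isomorphism
\[
R/(RI+RJ) \;\cong\; (R_1/I)\,\hat{\otimes}\,(R_2/J),
\]
so Lemma~\ref{lem:tens}(1) immediately yields $\depth(R/(RI+RJ)) = \depth(R_1/I)+\depth(R_2/J)$.

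For~(2), I would write down the standard Mayer--Vietoris sequence
\[
0 \to R/(RI\cap RJ) \xrightarrow{x\mapsto(x,x)} R/RI \,\oplus\, R/RJ \xrightarrow{(a,b)\mapsto a-b} R/(RI+RJ) \to 0.
\]
The same tensor-product identification as in~(1) gives $R/RI\cong (R_1/I)\,\hat{\otimes}\,R_2$ and $R/RJ\cong R_1\,\hat{\otimes}\,(R_2/J)$, so by Lemma~\ref{lem:tens}(1),
\[
\depth(R/RI)=\depth(R_1/I)+\depth(R_2),\qquad \depth(R/RJ)=\depth(R_1)+\depth(R_2/J).
\]
Both numbers strictly exceed $\depth(R_1/I)+\depth(R_2/J)=\depth(R/(RI+RJ))$ by the hypothesis $\depth(R_i/\cdot)<\depth(R_i)$. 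Since depth commutes with finite direct sums as a minimum, the middle term of the sequence has depth strictly greater than the right-hand term.

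The final step, and the only delicate point, is to invoke Lemma~\ref{lem:depth} with $M_1 = R/(RI\cap RJ)$, $M_2 = R/RI\oplus R/RJ$ and $M_3 = R/(RI+RJ)$. The case $\depth(M_1)\leqslant\depth(M_3)$ must first be ruled out: in that regime $\min(\depth(M_1),\depth(M_3))=\depth(M_1)<\depth(M_2)$, and the ``strict inequality'' clause of Lemma~\ref{lem:depth} would then force $\depth(M_1)=\depth(M_3)+1>\depth(M_3)$, a contradiction. Hence $\depth(M_1)>\depth(M_3)$, so $\min(\depth(M_1),\depth(M_3))=\depth(M_3)<\depth(M_2)$, and Lemma~\ref{lem:depth} applied once more gives $\depth(M_1)=\depth(M_3)+1=\depth(R_1/I)+\depth(R_2/J)+1$, as required.
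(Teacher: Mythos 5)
Your proposal is correct and follows essentially the same route as the paper: the identification $R/(RI+RJ)\cong (R_1/I)\,\hat{\otimes}\,(R_2/J)$ together with Lemma~\ref{lem:tens} for part~(1), and the Mayer--Vietoris sequence combined with Lemma~\ref{lem:depth} for part~(2). Your final case analysis is slightly more elaborate than needed (since $\min(\depth(M_1),\depth(M_3))\leqslant\depth(M_3)<\depth(M_2)$ always, the strictness clause applies immediately), but it is valid and reaches the same conclusion.
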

\begin{proof}\ 
\begin{enumerate}
\item The statement follows from Lemma~\ref{lem:tens} noticing that $R/(RI+RJ)\simeq (R_1/I)\hat{\otimes} (R_2/J)$.

\item Let us consider the exact sequence 
\begin{equation}
\label{eq:inter}
0\to R/(RI\cap RJ)\to (R/RI)\oplus(R/RJ)\to R/(RI+RJ)\to 0.
\end{equation}
Applying Lemma~\ref{lem:tens} to $R/RI=(R_1/I)\hat{\otimes}R_2$ yields 
\[
\depth(R/RI)=\depth(R_1/I)+\depth(R_2).
\]
By assumption $\depth(R_2)>\depth(R_2/J),$ hence \eqref{I+J} and Lemma~\ref{lem:tens} imply
\[\depth(R/RI)>\depth(R/(RI+RJ)).\] Analogously we obtain 
\[\depth(R/RJ)>\depth(R/(RI+RJ)).\]
Since $\depth((R/RI)\oplus (R/RJ))=\min(\depth(R/RI),\depth(R/RJ))$, we get 
\[
\depth((R/RI)\oplus (R/RJ))>\depth(R/(RI+RJ)).
\]
In this case the inequality in Lemma~\ref{lem:depth} is strict, hence
\[
\depth(R/(RI\cap RJ))=\depth(R/(RI+RJ))+1.
\] 
\end{enumerate}
\end{proof}

\begin{prop}
\label{prop:CM}
Let $R_1$ and $R_2$ be two analytic $\C$-algebras and $R=R_1\hat{\otimes} R_2$. Let $I\subseteq R_1$ and $J\subseteq R_2$. We assume that $\depth\lrp{R_1/I}<\depth(R_1)$ and $\depth\lrp{R_2/J}<\depth(R_2)$. Then the following are equivalent:
\begin{enumerate}
\item $R/(RI\cap RJ)$ is Cohen-Macaulay, 
\item \label{prop:CM:2} $R_1, R_2, R_1/I$ and $R_2/J$ are Cohen-Macaulay, $\dim(R_1/I)=\dim(R_1)-1$ and $\dim(R_2/J)=\dim(R_2)-1$.
\end{enumerate}
\end{prop}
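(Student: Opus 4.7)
The plan is to reduce the Cohen--Macaulay condition for $R/(RI\cap RJ)$ to a numerical equality between depth and dimension, and then exploit the hypothesis $\depth(R_i/I_i)<\depth(R_i)$ to force all the asserted equalities.

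First, I would compute the depth and the dimension of $R/(RI\cap RJ)$ separately. Lemma~\ref{lem:summands:inter}(2) directly gives
\[
\depth\bigl(R/(RI\cap RJ)\bigr)=\depth(R_1/I)+\depth(R_2/J)+1.
\]
For the dimension, I would observe that $\mathrm{V}(RI\cap RJ)=\mathrm{V}(RI)\cup\mathrm{V}(RJ)$, whence
\[
\dim\bigl(R/(RI\cap RJ)\bigr)=\max\bigl(\dim(R/RI),\,\dim(R/RJ)\bigr),
\]
and Lemma~\ref{lem:tens}(2) expresses the two terms as $\dim(R_1/I)+\dim(R_2)$ and $\dim(R_1)+\dim(R_2/J)$ respectively.

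Next, to prove $(1)\Rightarrow(2)$, I would set $d_i=\depth(R_i/I_i)$, $D_i=\dim(R_i/I_i)$, $e_i=\depth(R_i)$, $E_i=\dim(R_i)$ (with $I_1=I$, $I_2=J$) and rewrite the Cohen--Macaulay condition as
\[
d_1+d_2+1=\max(D_1+E_2,\;E_1+D_2).
\]
The standing bounds $d_i\le D_i\le E_i$ together with the hypothesis $d_i<e_i\le E_i$, which gives $d_i\le E_i-1$, imply $d_1+d_2+1\le D_1+E_2$ and $d_1+d_2+1\le E_1+D_2$. Hence the max coincides with $d_1+d_2+1$ only if whichever summand realises it forces simultaneous equalities. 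Assume WLOG $D_1+E_2\ge E_1+D_2$; equality $d_1+d_2+1=D_1+E_2$ then forces $d_1=D_1$ and $d_2=E_2-1=e_2-1$, so $R_2$ is Cohen--Macaulay and $R_1/I$ is Cohen--Macaulay. From $D_1+E_2\ge E_1+D_2$ combined with $D_2\ge d_2=E_2-1$ I get $D_2=E_2-1$ and $D_1\ge E_1-1$; combined with $D_1=d_1\le e_1-1\le E_1-1$ this yields $D_1=E_1-1$, $d_1=e_1-1$ and $e_1=E_1$, so $R_1$ is Cohen--Macaulay and $R_2/J$ is Cohen--Macaulay of dimension $E_2-1$. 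The symmetric case is identical.

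The converse $(2)\Rightarrow(1)$ is a direct substitution: under the assumptions of (2) we have $d_i=D_i=E_i-1=e_i-1$, so both sides of the Cohen--Macaulay equality become $E_1+E_2-1$. The only real technical point is the dimension identity $\dim(R/(RI\cap RJ))=\max(\dim(R/RI),\dim(R/RJ))$, which I would justify via the equality of the radicals $\sqrt{RI\cap RJ}=\sqrt{RI}\cap\sqrt{RJ}$ and the fact that passing to the reduction does not change dimension; everything else is bookkeeping with the depth/dimension inequalities and the two lemmas already at hand.
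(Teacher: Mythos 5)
Your proof is correct and follows essentially the same route as the paper: both compute $\depth(R/(RI\cap RJ))$ via Lemma~\ref{lem:summands:inter} and $\dim(R/(RI\cap RJ))=\max(\dim(R/RI),\dim(R/RJ))$ via Lemma~\ref{lem:tens}, and then extract statement (2) from the resulting numerical identity between depth and dimension. The only (harmless) presentational quibble is that in your case analysis the equality $D_2=E_2-1$ is announced before the bound $D_1\le E_1-1$ on which it actually depends has been invoked, but all the required inequalities appear in the very next clause, so the argument closes correctly.
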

\begin{proof}
	By Lemma~\ref{lem:summands:inter}, we have:
	\begin{equation}
		\label{eq:depth:IcapJ}
		\depth\lrp{R/(RI\cap RJ)}=\depth\lrp{R_1/I}+\depth\lrp{R_2/J}+1.
	\end{equation}
	
	Furthermore, Lemma~\ref{lem:tens}  and our assumptions imply the following inequality:
	\begin{eqnarray} \label{eq:dimineq}
		\dim(R/(RI\cap RJ))=&\max\left(\dim(R/RI), \dim(R/RJ)\right)\nonumber  \\
		=& \max\left(\dim(R_1/I)+\dim(R_2), \dim(R_1)+\dim(R_2/J) \right) \nonumber \\
		\geqslant& \min\left(\dim(R_1/I)+\dim(R_2), \dim(R_1)+\dim(R_2/J) \right) \nonumber \\
		\geqslant& \min\left(\depth(R_1/I)+\depth(R_2), \depth(R_1)+\depth(R_2/J) \right) \nonumber \\
		\geqslant  & \depth(R_1/I)+\depth(R_2/J)+1. 
	\end{eqnarray}
	
	Assume first that the hypothesis of the second statement is satisfied. In this case Inequality \eqref{eq:dimineq} becomes an equality.
	Then the first statement follows by using Equation~\eqref{eq:depth:IcapJ}. \newline 
	Next we assume that $R/(RI\cap RJ)$ is Cohen-Macaulay. Due to Equation~\eqref{eq:depth:IcapJ} and Inequality \eqref{eq:dimineq} we obtain: 
	\begin{eqnarray*}
		\depth\lrp{R/(RI\cap RJ)} = & \depth\lrp{R_1/I}+\depth\lrp{R_2/J}+1 \\
		\leqslant& \dim(R/(RI\cap RJ))
	\end{eqnarray*}
	Since $R/(RI\cap RJ)$ is Cohen-Macaulay, equality holds everywhere, which yields that $R_1, R_2, R_1/I$ and $R_2/J$ are Cohen-Macaulay and $\dim(R_2/J)=\dim(R_2)-1$ and $\dim(R_1/I)=\dim(R_1)-1$.
\end{proof}

\begin{lem}[\protect{\cite[Kapitel III, \S5 Korollar zu Satz 5]{GR71}}]
	\label{lem:IJ:IinterJ}
	Let $R_1$ and $R_2$ be two analytic $\C$-algebras 
	and $R=R_1\hat{\otimes} R_2$.
	Let $I \subseteq R_1$ and $J \subseteq R_2$ be ideals. Then the following equality holds in the ring $R$:
	\[
	RI\cdot RJ=RI\cap RJ.
	\]
\end{lem}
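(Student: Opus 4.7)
The inclusion $RI \cdot RJ \subseteq RI \cap RJ$ is immediate from the definitions, since every generator of the product ideal lies in both $RI$ and $RJ$. The content of the lemma is the reverse inclusion, and my plan is to obtain it from the flatness properties of the analytic tensor product, which is the reason the authors cite Grauert--Remmert.

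First I would invoke the fact that for analytic $\C$-algebras the functor $- \hat{\otimes} R_2 \colon R_1\text{-mod} \to R\text{-mod}$ is exact, so that $R$ is flat over $R_1$ (and symmetrically over $R_2$). Applying it to $0 \to I \to R_1 \to R_1/I \to 0$ gives a short exact sequence
\[
0 \to I \hat{\otimes} R_2 \to R \to (R_1/I) \hat{\otimes} R_2 \to 0,
\]
identifying $RI$ with the image of $I \hat{\otimes} R_2$ in $R$; by symmetry $RJ$ is identified with the image of $R_1 \hat{\otimes} J$. This identification is the key technical input and the place where one has to be careful about the distinction between analytic and algebraic tensor products.

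Next, I would apply the flat functor $I \hat{\otimes} -$ to $0 \to J \to R_2 \to R_2/J \to 0$, producing the short exact sequence
\[
0 \to I \hat{\otimes} J \to I \hat{\otimes} R_2 \to I \hat{\otimes} (R_2/J) \to 0.
\]
The composite $I \hat{\otimes} J \to I \hat{\otimes} R_2 = RI \hookrightarrow R$ sends an elementary tensor $i \otimes j$ to the product $(i \otimes 1)(1 \otimes j)$, so its image inside $R$ is exactly the ideal product $RI \cdot RJ$. On the other hand the map $RI \to R/RJ$ factors as
\[
RI = I \hat{\otimes} R_2 \;\twoheadrightarrow\; I \hat{\otimes} (R_2/J) \;\hookrightarrow\; R_1 \hat{\otimes} (R_2/J) = R/RJ,
\]
where the second arrow is injective by flatness in the first variable. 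The kernel of this composition is therefore $I \hat{\otimes} J$, and that kernel is by definition $RI \cap RJ$. Combining the two identifications yields $RI \cap RJ = RI \cdot RJ$.

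The main obstacle is foundational rather than computational: it is making sure that the natural maps $I \hat{\otimes} R_2 \to R$ and $I \hat{\otimes} J \to R$ are injections with the correct images, which in turn rests on the flatness/exactness of the analytic tensor product for analytic $\C$-algebras. Once these are in hand, the argument is a routine diagram chase, which is presumably why the authors defer to the corollary in Grauert--Remmert.
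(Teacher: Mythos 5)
The paper does not prove this lemma at all: it is imported verbatim as a citation of Grauert--Remmert (Kapitel III, \S5, Korollar zu Satz 5), so there is no internal argument to compare against. Your reconstruction is nevertheless correct and is essentially the standard flatness proof of the cited corollary. The trivial inclusion $RI\cdot RJ\subseteq RI\cap RJ$ is as you say, and the reverse inclusion follows from your factorization of $RI\to R/RJ$: by right-exactness of the tensor product the kernel of $I\hat{\otimes}R_2\twoheadrightarrow I\hat{\otimes}(R_2/J)$ is the image of $I\hat{\otimes}J$, which inside $R$ is exactly the ideal $RI\cdot RJ$, while the kernel of the full composite $RI\to R/RJ$ is by definition $RI\cap RJ$; injectivity of the second arrow then forces the two kernels to coincide. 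The one point worth stating more explicitly is that this injectivity needs the flatness of $R_1\hat{\otimes}(R_2/J)\cong R/RJ$ over $R_1$ --- i.e.\ flatness of the analytic tensor product over each factor for an \emph{arbitrary} analytic algebra in the other slot, not merely flatness of $R$ itself over $R_1$. That is precisely the content of the surrounding results in Grauert--Remmert, Kapitel III \S5, so your deferral is legitimate; with that input your diagram chase closes the argument. In short: where the authors cite, you prove, and the proof is sound.
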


\begin{proof}[Proof of Theorem~\ref{theo:prod}]

We set for $i\in\lra{1,2}$, $R_i=S_i/I_{X_i}$ and \\ $R=S/I_X=S_1/I_{X_1}\hat{\otimes} S_2/I_{X_2}$. 

For $i\in\lra{1,2}$, let $J_{X_i/C_i}\subseteq S_i$ and $J_{X/C}\subseteq S$ be defined as in Notation~\ref{nota:JXC}. We denote by $\pi : S\to R,$ respectively $\pi_i: S_i \rightarrow R_i$ the canonical surjections. 
Then $J_{C}=SJ_{C_1}\cdot SJ_{C_2}\subseteq S,$ hence Lemma~\ref{lem:IJ:IinterJ} implies
\begin{eqnarray}
	\pi(J_{X/C})=& \pi(J_{C})\nonumber\\ 
	=& R\pi_1(J_{C_1}) \cdot R\pi_2(J_{C_2})\nonumber\\
	=&R\pi_1(J_{X_1/C_1})\cdot R\pi_2(J_{X_2/C_2}) \nonumber\\
	=&R\pi_1(J_{X_1/C_1})\cap R\pi_2(J_{X_2/C_2}) \label{eq:dd}
\end{eqnarray}

First we assume $J_{X_i/C_i} \neq S_i$ for $i \in \{1,2\}.$
Then, by Proposition~\ref{prop:free:sing:loc}, $X$ is free if and only if $R/\pi(J_{X/C})$ is Cohen-Macaulay of $R$-codimension $1.$ By Equation~\eqref{eq:dd} and Proposition~\ref{prop:CM} we obtain that  $R/\pi(J_{X/C})$ is Cohen-Macaulay if and only if for $i\in\lra{1,2}$ it holds that $R_i$ and $R_i/\pi_i(J_{X_i/C_i})$ are Cohen-Macaulay and $\dim(R_i)=\dim(R_i/\pi_i(J_{X_i/C_i}))+1.$ This is, again by Proposition~\ref{prop:free:sing:loc}, equivalent to the fact that $X_1$ and $X_2$ are free.
Next we consider the case $J_{X_i/C_i}=S_i$ for at least one $i \in \{1,2\}.$ 
In case $J_{X/C}=S$ the statement is obvious, hence we assume without loss of generality $J_{X/C}=SJ_{X_1/C_1}.$ Then $R/\pi(J_{X/C})\cong R_1/\pi_1(J_{X_1/C_1}) \hat{\otimes} R_2.$ In this setup the statement follows from Lemma \ref{lem:tens}.
\end{proof}

\begin{remar}
As a consequence, if $X_1$ and $X_2$ are free Cohen-Macaulay subspaces, we have 
\begin{multline*}
\projdim{\Derlog{X_1\times X_2}{k_1+k_2}}=\\
\projdim{\Derlog{X_1}{k_1}}+\projdim{\Derlog{X_2}{k_2}}+1
\end{multline*}
\end{remar}

A different notion of product for hyperplane arrangements is considered in \cite[Definition 2.13]{orlik-terao-hyperplanes}. It can be generalized to subspaces of higher codimension as follows:

\begin{de}
Let $X_1\subseteq \C^{n_1}$ and $X_2\subseteq \C^{n_2}$ be two equidimensional subspaces, both of the same codimension $k$. We set $X_1*X_2=X_1\times \C^m\cup \C^n\times X_2$.
\end{de}

\begin{nota}
\label{nota:prod}
Let $X_1\subseteq \C^{n_1}$ and $X_2\subseteq \C^{n_2}$ be two reduced equidimensional subspaces, both of the same codimension $k$. Let $X_1'=X_1\times \C^{n_2}$ and $X_2'=\C^{n_1}\times X_2$. 

For $i\in\lra{1,2}$ let $\iota_i : \bigwedge^k\mathrm{Der}_{\C^{n_i}}\to \bigwedge^k\mathrm{Der}_{\C^{n_1+n_2}}$ be the canonical maps.  We identify $\Derlog{X_i}{k}$ with the submodule of $\bigwedge^k\mathrm{Der}_{\C^{n_1+n_2}}$ generated by $\iota_i\lrp{\Derlog{X_i}{k}}$. 

Consider the decomposition:
\[ 
\bigwedge^k\mathrm{Der}_{\C^{n_1+n_2}}=D_1\oplus D_2\oplus D_{1,2}
\]
where $D_i$ is the submodule generated by the image of $\bigwedge^k \mathrm{Der}_{\C^{n_i}}$ in $\bigwedge^k\mathrm{Der}_{\C^{n_1+n_2}}$ and $D_{1,2}$  is the free submodule of $\bigwedge^k \mathrm{Der}_{\C^{n+m}}$ generated by the elements of the form $\dr_{x_{i_1}}\wedge \dots\wedge \dr_{x_{i_p}}\wedge \dr_{y_{j_1}}\wedge \dots\wedge \dr_{y_{j_{k-p}}}$ where $p\in\lra{1,\ldots,k-1}$.

\end{nota}

A similar result as Theorem~\ref{theo:prod} is satisfied, which generalizes \cite[Proposition 4.28]{orlik-terao-hyperplanes}:
\begin{prop}
Let $X_1\subseteq \C^{n_1}$ and $X_2\subseteq \C^{n_2}$ be two reduced equidimensional subspaces, both of the same codimension $k$. Then, with Notation~\ref{nota:prod}:
\[
\Derlog{X_1*X_2}{k}=\Derlog{X_1}{k}\oplus \Derlog{X_2}{k}\oplus D_{1,2}.
\]

 In particular, $X_1*X_2$ is free if and only if both $X_1$ and $X_2$ are free.
\end{prop}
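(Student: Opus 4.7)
The plan is to apply Proposition~\ref{prop:inter} to write $\Derlog{X_1*X_2}{k}=\Derlog{X_1'}{k}\cap\Derlog{X_2'}{k}$, then to compute each factor explicitly using the splitting $\bigwedge^k\mathrm{Der}_{\C^{n_1+n_2}}=D_1\oplus D_2\oplus D_{1,2}$ from Notation~\ref{nota:prod}, and finally to deduce the freeness statement by a projective dimension count on the resulting direct sum.

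To compute $\Derlog{X_1'}{k}$ with $X_1'=X_1\times\C^{n_2}$, I would decompose an arbitrary $\delta=\delta_1+\delta_2+\delta_{1,2}$ according to $D_1\oplus D_2\oplus D_{1,2}$. Since $I_{X_1'}=S\cdot I_{X_1}$ is generated by elements of $S_1$, for any $(f_1,\ldots,f_k)\in I_{X_1}^k$ the form $\dd f_1\wedge\cdots\wedge\dd f_k$ only involves $\dd x_j$'s. Its pairing with $\delta_2$ (pure $y$-derivatives) and with $\delta_{1,2}$ (containing at least one $\partial_{y_j}$) vanishes, so $D_2$ and $D_{1,2}$ both lie automatically inside $\Derlog{X_1'}{k}$. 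For the $D_1$-part, the relevant Jacobian minors $m_E$ lie in $S_1$, and the condition $\sum_E a_E(x,y)\,m_E(x)\in S\cdot I_{X_1}$ becomes, by expanding coefficients of $\delta_1$ in a basis of $S$ viewed as a free (hence flat) $S_1$-module, the condition that each $y$-coefficient of $\delta_1$ lies in $\Derlog{X_1}{k}$. Under the identifications of Notation~\ref{nota:prod}, this gives $\Derlog{X_1'}{k}=\Derlog{X_1}{k}\oplus D_2\oplus D_{1,2}$, and symmetrically $\Derlog{X_2'}{k}=D_1\oplus\Derlog{X_2}{k}\oplus D_{1,2}$.

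Intersecting summand by summand (using that $\Derlog{X_i}{k}\subseteq D_i$) then yields
\[
\Derlog{X_1*X_2}{k}=\Derlog{X_1}{k}\oplus\Derlog{X_2}{k}\oplus D_{1,2}.
\]
For the freeness statement, $D_{1,2}$ is a free $S$-module of projective dimension $0$, and by flatness of $S$ over $S_i$ one has $\mathrm{projdim}_S(S\otimes_{S_i}\Derlog{X_i}{k})=\mathrm{projdim}_{S_i}\Derlog{X_i}{k}$, hence
\[
\mathrm{projdim}_S\Derlog{X_1*X_2}{k}=\max\bigl(\mathrm{projdim}_{S_1}\Derlog{X_1}{k},\ \mathrm{projdim}_{S_2}\Derlog{X_2}{k}\bigr).
\]
Combined with the known lower bound $\mathrm{projdim}\Derlog{X_i}{k}\geq k-1$ for reduced equidimensional codimension-$k$ subspaces, this maximum equals $k-1$ iff both of its arguments do, i.e., iff both $X_1$ and $X_2$ are free.

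The main obstacle is the descent step for the $D_1$-part: translating the condition of being logarithmic along $X_1'\subseteq\C^{n_1+n_2}$ into the base-changed condition along $X_1\subseteq\C^{n_1}$. This rests on the flatness (indeed freeness) of $S$ over $S_1$ together with the fact that the Jacobian minors of generators of $I_{X_1}$ lie entirely in $S_1$; once formulated carefully it amounts to the observation that a relation $\sum_E a_E(x,y)\,m_E(x)\in S\cdot I_{X_1}$ holds iff it holds coefficient-wise in a $y$-expansion. The remainder of the proof is a bookkeeping exercise on direct sum decompositions and elementary homological algebra.
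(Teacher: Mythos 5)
Your proof is correct and follows essentially the same route as the paper: reduce to $\Derlog{X_1'}{k}\cap\Derlog{X_2'}{k}$ via Proposition~\ref{prop:inter}, identify these two modules in terms of the decomposition $D_1\oplus D_2\oplus D_{1,2}$, and conclude with a projective dimension count using the lower bound $\projdim{\Derlog{X_i}{k}}\geqslant k-1$. The only difference is that you spell out the verification of the decompositions of $\Derlog{X_i'}{k}$ (vanishing of the pairing on $D_2$ and $D_{1,2}$, and the descent of the $D_1$-condition by faithful flatness of $S$ over $S_1$), which the paper simply asserts.
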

\begin{proof}

   We have:
\[
\Derlog{X_1'}{k}=\Derlog{X_1}{k}\oplus D_{2}\oplus D_{1,2},
\]
\[
\Derlog{X_2'}{k}=D_1 \oplus \Derlog{X_2}{k} \oplus D_{1,2}.
\]

By Proposition~\ref{prop:inter}, $\Derlog{X_1*X_2}{k}=\Derlog{X_1'}{k}\cap\Derlog{X_2'}{k}$. We thus have the decomposition:
\[
\Derlog{X_1*X_2}{k}=\Derlog{X_1}{k}\oplus \Derlog{X_2}{k}\oplus D_{1,2}
\]

A minimal free resolution of $\Derlog{X_1*X_2}{k}$ is thus given as the direct sum of minimal free resolutions of $\Derlog{X_1}{k}, \Derlog{X_2}{k}$ and $D_{1,2}$. Since $D_{1,2}$ is free, the projective dimension of $\Derlog{X_1*X_2}{k}$ is 
\[
\max\lra{\projdim{\Derlog{X_1}{k}},\projdim{\Derlog{X_2}{k}}}.
\] 

Since by \cite[Proposition 4.2]{polfreeci}, $\projdim{\Derlog{X_i}{k}}\geqslant k-1$, we have $\projdim{\Derlog{X_1*X_2}{k}}=k-1$ if and only if 
\[
\projdim{\Derlog{X_1}{k}}=\projdim{\Derlog{X_2}{k}}=k-1.
\] 
\end{proof}

\bibliographystyle{alpha}
\bibliography{bibli2}

\begin{thebibliography}{DGPS19}

\bibitem[Ale88]{alek}
Alexandr~G. Aleksandrov.
\newblock Nonisolated {S}aito singularities.
\newblock {\em Mat. Sb. (N.S.)}, 137(179)(4):554--567, 576, 1988.

\bibitem[AT08]{alektsikh}
Aleksandr~G. Aleksandrov and Avgust~K. Tsikh.
\newblock Multi-logarithmic differential forms on complete intersections.
\newblock {\em J. Siberian Federal University}, 2:105--124, 2008.

\bibitem[DGPS19]{DGPS}
Wolfram Decker, Gert-Martin Greuel, Gerhard Pfister, and Hans Sch\"onemann.
\newblock {\sc Singular} {4-1-2} --- {A} computer algebra system for polynomial
  computations.
\newblock \url{http://www.singular.uni-kl.de}, 2019.

\bibitem[dJP00]{dejong}
Theo de~Jong and Gerhard Pfister.
\newblock {\em Local analytic geometry}.
\newblock Advanced Lectures in Mathematics. Friedr. Vieweg \& Sohn,
  Braunschweig, 2000.
\newblock Basic theory and applications.

\bibitem[Epu20]{muhthese}
Raul-Paul Epure.
\newblock {\em Explicit and effective Mather-Yau correspondence in view of
  analytic gradings}.
\newblock Dissertation, Technische Universit{\"a}t Kaiserslautern, 2020.

\bibitem[GR71]{GR71}
Hans Grauert and Reinhold Remmert.
\newblock {\em Analytische {S}tellenalgebren}.
\newblock Springer-Verlag, Berlin-New York, 1971.
\newblock Unter Mitarbeit von O. Riemenschneider, Die Grundlehren der
  mathematischen Wissenschaften, Band 176.

\bibitem[GS12]{gsci}
Michel Granger and Mathias Schulze.
\newblock Dual logarithmic residues and free complete intersections.
\newblock {\em Hal Archives Ouvertes}, (hal-00656220, version 2), 2012.

\bibitem[HM86]{HM86}
Herwig Hauser and Gerd M\"{u}ller.
\newblock Harmonic and dissonant singularities.
\newblock In {\em Proceedings of the conference on algebraic geometry
  ({B}erlin, 1985)}, volume~92 of {\em Teubner-Texte Math.}, pages 123--134.
  Teubner, Leipzig, 1986.

\bibitem[OT92]{orlik-terao-hyperplanes}
Peter Orlik and Hiroaki Terao.
\newblock {\em Arrangements of hyperplanes}, volume 300 of {\em Grundlehren der
  Mathematischen Wissenschaften [Fundamental Principles of Mathematical
  Sciences]}.
\newblock Springer-Verlag, Berlin, 1992.

\bibitem[Pol16]{polthese}
Delphine Pol.
\newblock Singularit\'es libres, formes et r\'esidus logarithmiques.
\newblock {\em Th\`ese de doctorat}, (tel-01441450), 2016.

\bibitem[Pol20]{polfreeci}
Delphine Pol.
\newblock Characterizations of freeness for equidimensional subspaces.
\newblock {\em J. Singul.}, 20:1--30, 2020.

\bibitem[Sai75]{saito75}
Kyoji Saito.
\newblock On the uniformization of complements of discriminant loci.
\newblock {\em Symp. in Pure Math., Williams College}, 1975.

\bibitem[Sai80]{Sai80}
Kyoji Saito.
\newblock Theory of logarithmic differential forms and logarithmic vector
  fields.
\newblock {\em J. Fac. Sci. Univ. Tokyo Sect. IA Math.}, 27(2):265--291, 1980.

\bibitem[ST18]{STfree}
Mathias Schulze and Laura Tozzo.
\newblock A residual duality over {G}orenstein rings with application to
  logarithmic differential forms.
\newblock {\em J. Singul.}, 18:272--299, 2018.

\bibitem[Ter80]{terao-hyperplanes-freeness-80}
Hiroaki Terao.
\newblock Arrangements of hyperplanes and their freeness. {I}.
\newblock {\em J. Fac. Sci. Univ. Tokyo Sect. IA Math.}, 27(2):293--312, 1980.

\end{thebibliography}

\end{document}